\theoremstyle{plain}
\newtheorem{thm}{Theorem}[section]
\newtheorem{prop}[thm]{Proposition}
\newtheorem{lemma}[thm]{Lemma}
\newtheorem{cor}[thm]{Corollary}
\theoremstyle{definition}
\newtheorem{defi}[thm]{Definition}
\theoremstyle{remark}
\newtheorem{remark}[thm]{Remark}
\newtheorem*{ack}{Acknowledgements}
\newcommand{\ZZ}{\ensuremath{\mathbb Z}}
\newcommand{\RR}{\ensuremath{\mathbb R}}
\renewcommand{\graph}{\mathrm{graph}}
\newcommand{\dom}{\mathrm{dom}}
\newcommand{\Kr}{\mathrm{Kr}}
\newcommand{\pd}[1]{{\partial_{#1}}} 
\newcommand{\li}{\ensuremath{L_{\infty}}}
\definecolor{forest}{rgb}{0,0.5,0}
\begin{document}

\title[Deformations of coisotropic submanifolds]{Deformations of coisotropic submanifolds for fibrewise entire Poisson structures}

\author{Florian Sch\"atz}
\address{Department of Mathematics, Utrecht University, 3508 TA Utrecht, The Netherlands}
\email{florian.schaetz@gmail.com}

\author{Marco Zambon}
\address{Universidad Aut\'onoma de Madrid (Departamento de Matem\'aticas) and ICMAT(CSIC-UAM-UC3M-UCM),
Campus de Cantoblanco,
28049 - Madrid, Spain}
\email{marco.zambon@uam.es, marco.zambon@icmat.es}

\date{\today}
\thanks{2000 Mathematics Subject Classification:   primary,  secondary   .}

\begin{abstract}
We show that deformations of a coisotropic submanifold
inside a fibrewise entire Poisson manifold are controlled by the
$L_\infty$-algebra introduced by Oh-Park (for symplectic manifolds) and Cattaneo-Felder.
In the symplectic case, we recover results previously obtained by Oh-Park.
Moreover we consider the extended deformation problem and prove its obstructedness.
\end{abstract}

\thanks{
}

\maketitle
\setcounter{tocdepth}{1} 
\tableofcontents

\section*{Introduction}\label{intro}

 We consider deformations of coisotropic submanifolds inside
 a fixed Poisson manifold $(M,\pi)$, with $\pi$ a fibrewise entire Poisson structure, see Definition \ref{defi:fibrewise_entire}.

We build on work of Oh and Park \cite{OP}, who realized that deformations of a coisotropic submanifold inside a symplectic manifold are governed by an $L_{\infty}[1]$-algebra.
 The construction of the $L_{\infty}[1]$-algebra structure was extended
\cite{CaFeCo2} to arbitrary coisotropic submanifolds of Poisson manifolds by Cattaneo and Felder. The $L_{\infty}[1]$-algebra depends only on the $\infty$-jet of $\pi$ along the coisotropic submanifold $C$, so it has too little information to codify $\pi$ near $C$ in general. In particular it does not encode the coisotropic submanifolds of $(M,\pi)$ nearby $C$, see \cite[Ex. 3.2 in \S 4.3]{FloDiss} for an
example of this.

In this note we show that if the identification between a tubular neighbourhood of $C$ in $M$ and a neighbourhood in its normal bundle $NC$ is chosen so that $\pi$ corresponds to a  \emph{fibrewise entire} bivector field on $NC$, the $L_{\infty}[1]$-algebra structure encodes coisotropic submanifolds of $(M,\pi)$ nearby $C$; see \S\ref{sec:def}.
For instance, such an identification exists for coisotropic submanifolds of symplectic manifolds; see {\S\ref{sec:sym}}. Further, we show that the problem of deforming simultaneously
the Poisson structure $\pi$ and the coisotropic submanifold $C$ is
formally obstructed; see \S\ref{sec:sim}.

\begin{ack}
M.Z. thanks Daniel Peralta-Salas and { Alberto Mart\'in Zamora
for their help on analytic issues, and Andrew Lewis for interesting discussions.}
F.S. thanks Alberto Cattaneo and Camille Laurent-Gengoux for stimulating discussions.
M.Z. was partially supported by  projects PTDC/MAT/098770/2008 and PTDC/MAT/099880/2008 (Portugal) and by  projects MICINN RYC-2009-04065,  MTM2009-08166-E, MTM2011-22612, ICMAT Severo Ochoa project SEV-2011-0087 (Spain). F.S. was supported by ERC Starting Grant no. 279729.
\end{ack}

\section{Deformations in the fibrewise entire case}\label{sec:def}

We introduce the notion of a fibrewise entire (multi)vector field on a vector bundle $E\to C$.
Then we show that whenever $E$ is equipped with a fibrewise entire Poisson structure such that $C$ is coisotropic, the $L_{\infty}[1]$-algebra structure associated to $C$ encodes coisotropic submanifolds sufficiently close to $C$ (in the $\mathcal{C}^1$-topology).

\subsection{Fibrewise entire multivector fields}

Let $E\to C$ be a vector bundle throughout this subsection.

\begin{defi}
The set $\mathcal{C}^{\omega}(E)$ of {\em locally defined fibrewise entire functions}
on $E$ contains
those smooth functions $f: U\to \mathbb{R}$ which are defined on some tubular neighborhood $U$ of $C$ in $E$
and whose restriction to each fibre $U_x = U \cap E_x$ is given by a convergent power series.

Given $f\in \mathcal{C}^{\omega}(E)$, we denote by $\dom(f)$ the tubular neighbourhood on which $f$ is defined.
\end{defi}

\begin{remark}
\hspace{0cm}
\begin{enumerate}
\item To be more accurate, ``entire'' should read ``real entire'' in the above definition. We will usually also drop the term ``locally defined'' and simply
refer to $\mathcal{C}^{\omega}(E)$ as the {\em fibrewise entire functions} on $E$.
\item The set $\mathcal{C}^{\omega}(E)$ forms an algebra under the usual multiplication of functions.
\end{enumerate}
\end{remark}

\begin{defi}
The set $\chi_{\omega}(E)$ of {\em fibrewise entire vector fields} contains those smooth vector fields $X$ which are defined on some tubular neighborhood $U$ of $C$ in $E$
and whose actions on (locally defined) functions sends fibrewise polynomial functions to $\mathcal{C}^{\omega}(E)$.

Given $X \in \chi_{\omega}(E)$, we denote by $\dom(X)$ the tubular neighborhood on which $X$ is defined.
\end{defi}

We state two other descriptions of $\chi_\omega(E)$:

\begin{lemma}
Given $X$ a smooth vector field defined on some tubular neighborhood $U$ of $C$ in $E$,
the following assertions are equivalent:
\begin{enumerate}
 \item $X$ is fibrewise entire.
 \item If $(x_i)_{i=1}^m$ are local coordinates on $W\subset C$ and $(y_j)_{j=1}^n$ are fibre coordinates on $E|_W$, then
$X$ reads
$$ \sum_{i=1}^m h_i(x,y)\partial_{x_i} + \sum_{j=1}^n g_j(x,y)\partial_{y_j} $$
on $E|_W$ with $(h_i)_{i=1}^m$ and $(g_j)_{j=1}^n$ fibrewise entire functions.
\end{enumerate}

\end{lemma}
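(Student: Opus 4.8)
The plan is to prove the two implications separately, each reducing to the two elementary facts recorded above: the fibrewise polynomial functions form a subalgebra of $\mathcal{C}^{\omega}(E)$, and $\mathcal{C}^{\omega}(E)$ is itself an algebra. A preliminary observation is that being fibrewise entire is a local condition over $C$ and is insensitive to the chosen local trivialisation: a change of frame acts along the fibres by a linear, hence entire, map with base-dependent (smooth) coefficients, and a change of coordinates on $C$ leaves the fibres untouched; consequently a function, or the coefficient functions of a vector field written as in (2), is fibrewise entire over each member of an atlas if and only if it is fibrewise entire over all of $E$. In particular it suffices, in both directions, to argue chart by chart.

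For $(1)\Rightarrow(2)$, fix local coordinates $(x_i)_{i=1}^m$ on $W\subset C$ and fibre coordinates $(y_j)_{j=1}^n$ on $E|_W$, so that $X=\sum_i h_i\,\partial_{x_i}+\sum_j g_j\,\partial_{y_j}$ with $h_i=X(x_i)$ and $g_j=X(y_j)$. The pulled-back functions $x_i$ are constant along the fibres and the $y_j$ are fibre-linear, hence both are fibrewise polynomial; since they are only locally defined, I multiply each of them by (the pullback of) a bump function on $C$ supported in $W$ and equal to $1$ near an arbitrary point $q_0$ of $W$, obtaining globally defined fibrewise polynomial functions to which the defining property of $\chi_{\omega}(E)$ applies. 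Since $X$ is local, $h_i$ and $g_j$ agree near $q_0$ with the resulting functions in $\mathcal{C}^{\omega}(E)$, so by locality $h_i,g_j$ are fibrewise entire over $W$, on a common tubular neighbourhood obtained by shrinking finitely often.

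For $(2)\Rightarrow(1)$, let $p$ be a globally defined fibrewise polynomial function and work over a chart $W$ as above, where $X(p)=\sum_i h_i\,\partial_{x_i}p+\sum_j g_j\,\partial_{y_j}p$. Differentiating a fibrewise polynomial function in an $x$\ndash{} or a $y$\ndash{}direction yields again a fibrewise polynomial function; each product $h_i\,\partial_{x_i}p$ and $g_j\,\partial_{y_j}p$ is then a product of a fibrewise entire function with a fibrewise polynomial one, hence fibrewise entire because polynomials lie in the algebra $\mathcal{C}^{\omega}(E)$; and a finite sum of fibrewise entire functions is fibrewise entire. Thus $X(p)$ is fibrewise entire over each $W$, hence over $E$, and since it is automatically smooth on $\dom(X)$ we conclude $X(p)\in\mathcal{C}^{\omega}(E)$, i.e.\ $X\in\chi_{\omega}(E)$. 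The only genuinely non-formal point is the preliminary observation that fibrewise-entireness may be tested chart by chart; the remaining difficulty is purely one of bookkeeping, namely keeping all the coefficient functions and all the functions $X(p)$ on a single common tubular neighbourhood, which I expect to be the main thing to get right.
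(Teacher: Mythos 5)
Your proposal is correct and follows exactly the route the paper takes: the paper's proof is a one-line remark that fibrewise entireness is local over $C$ and the equivalence is checked in coordinates, which is precisely the argument you spell out (applying $X$ to the coordinate functions for one direction, and using that $\mathcal{C}^{\omega}(E)$ is an algebra containing the fibrewise polynomials for the other). The extra care you take with bump functions and common domains is harmless detail beyond what the paper records.
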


\begin{proof}
Since being fibrewise entire is a local property with respect to the base manifold $C$,
the equivalence of (1) and (2) can be easily checked in coordinates.
\end{proof}

\begin{remark}
The requirement that a vector field $X$ be fibrewise entire seems {\em not}
to be equivalent to the requirement that the action of $X$ on smooth functions preserves the subalgebra $\mathcal{C}^{\omega}(E)$.
The difference between these two requirements should already be visible in the simplest case:
there should be a smooth function $$f: \mathbb{R}^2 \to \mathbb{R}$$ in two variables $x$ and $y$
such that for each fixed $x$, the function $y \mapsto f(x,y)$ is globally analytic, i.e. its Taylor expansion around $0$
converges to $f(x,y)$, while its partial derivative $\frac{\partial f}{\partial x}$ is {\em not} globally analytic.
Note that $\frac{\partial f}{\partial x}$ is $X(f)$ for the fibrewise entire vector field $\partial_x$.
\end{remark}

\begin{defi}\label{defi:fibrewise_entire}
The set $\chi^{\bullet}_{\omega}(E)$ of {\em fibrewise entire multivector fields} contains those smooth multivector fields $Z$, defined on some tubular neighborhood $U$ of $C$ in $E$, whose action
on (locally defined) functions sends fibrewise polynomial functions to $\mathcal{C}^{\omega}(E)$.

Given $Z \in \chi^{\bullet}_{\omega}(E)$, we denote by $\dom(Z)$ the tubular neighborhood on which $Z$ is defined.
\end{defi}

As for vector fields, one can give different, but equivalent, characterizations of fibrewise entire multivector fields:

\begin{lemma}
Given $Z$ a smooth $k$-vector field defined on some tubular neighborhood $U$ of $C$ in $E$,
the following assertions are equivalent:
\begin{enumerate}
 \item $Z$ is fibrewise entire.
 \item If $(x_i)_{i=1}^m$ are local coordinates on $W\subset C$ and $(y_j)_{j=1}^n$ are fibre coordinates on $E|_W$, then
$Z$ reads
$$ \sum_{r+s=k}\sum_{i_1,\dots,i_r,j_1,\dots,j_s} h_{i_1\dots i_r j_1\dots j_s}(x,y)\partial_{x_{i_1}}\wedge \cdots \wedge \partial_{x_{i_r}} \wedge
\partial_{y_{j_1}}\wedge \cdots \partial_{y_{j_s}} $$
on $E|_W$ with $(h_{i_1\dots i_r j_1\dots j_s})$ in $\mathcal{C}^{\omega}(E)$.
\item $Z$ can be written as the sum of wedge products of elements of $\chi_{\omega}(E)$.
\end{enumerate}
\end{lemma}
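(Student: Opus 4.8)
The plan is to verify the three equivalences in a chart over the base $C$ and then globalise. Since being fibrewise entire is a local condition with respect to $C$, I fix a chart $E|_W$ over an open $W\subset C$ with base coordinates $(x_i)_{i=1}^m$ and fibre coordinates $(y_j)_{j=1}^n$, and I write $Z$ in the coordinate form of assertion (2). I expect (1) $\Leftrightarrow$ (2) and (3) $\Rightarrow$ (2) to be essentially formal; the one nontrivial step is (2) $\Rightarrow$ (3).

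For (1) $\Leftrightarrow$ (2): each coefficient function in the coordinate expression of $Z$ equals, up to sign, the value of $Z$ on the tuple of coordinate functions labelled by the corresponding multi-index; these coordinate functions are fibrewise polynomial, so if $Z$ is fibrewise entire then all of its coefficients lie in $\mathcal{C}^{\omega}(E)$ --- after replacing $\dom(Z)$ by the intersection of the finitely many domains of the coefficients, which is again a tubular neighbourhood. Conversely, if the coefficients are fibrewise entire, then for fibrewise polynomials $f_1,\dots,f_k$ the function $Z(f_1,\dots,f_k)$ is a sum of products of those coefficients with partial derivatives of the $f_\ell$, which are again fibrewise polynomial; since $\mathcal{C}^{\omega}(E)$ is an algebra containing all fibrewise polynomials, $Z(f_1,\dots,f_k)\in\mathcal{C}^{\omega}(E)$. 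For (3) $\Rightarrow$ (2): if $Z$ is a sum of wedge products of fibrewise entire vector fields, then by the preceding lemma each factor has coefficients in $\mathcal{C}^{\omega}(E)$ in the chart $E|_W$, and the coefficients of $Z$ are polynomial expressions in these, hence again in $\mathcal{C}^{\omega}(E)$.

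The content is in (2) $\Rightarrow$ (3), and the obstacle is that the coordinate vector fields $\partial_{x_i},\partial_{y_j}$ exist only over a chart of $C$, so the local expression of $Z$ cannot be globalised term by term. I would fix a locally finite cover $\{W_\alpha\}$ of $C$ by charts, a subordinate partition of unity $\{\rho_\alpha\}$, and for each $\alpha$ a function $\sigma_\alpha\colon C\to[0,1]$ that equals $1$ on a neighbourhood of $\mathrm{supp}\,\rho_\alpha$ and is supported in $W_\alpha$. On $E|_{W_\alpha}$, group the coordinate expression of $Z$ as $Z = \sum_\beta h^\alpha_\beta\, Y^\alpha_{\beta,1}\wedge\cdots\wedge Y^\alpha_{\beta,k}$, a finite sum in which each $Y^\alpha_{\beta,\ell}$ is one of the coordinate vector fields $\partial_{x_i},\partial_{y_j}$ on $W_\alpha$ and $h^\alpha_\beta\in\mathcal{C}^{\omega}(E|_{W_\alpha})$. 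Pulled back to $E$, the functions $\rho_\alpha$ and $\sigma_\alpha$ are fibrewise constant, hence fibrewise entire; consequently $\rho_\alpha h^\alpha_\beta\, Y^\alpha_{\beta,1}$ and each $\sigma_\alpha Y^\alpha_{\beta,\ell}$ are fibrewise entire vector fields on $E|_{W_\alpha}$ which, having support inside that of $\rho_\alpha$, resp.\ $\sigma_\alpha$, extend by zero to elements of $\chi_\omega(E)$. Using $\sigma_\alpha\equiv 1$ on $\mathrm{supp}\,\rho_\alpha$, the identity
\[
\rho_\alpha Z \;=\; \sum_\beta \big(\rho_\alpha h^\alpha_\beta\, Y^\alpha_{\beta,1}\big)\wedge\big(\sigma_\alpha Y^\alpha_{\beta,2}\big)\wedge\cdots\wedge\big(\sigma_\alpha Y^\alpha_{\beta,k}\big)
\]
holds on all of $E$, both sides vanishing wherever $\rho_\alpha$ does. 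Summing over $\alpha$ exhibits $Z = \sum_\alpha \rho_\alpha Z$ as a sum --- locally finite if $C$ is non-compact --- of wedge products of elements of $\chi_\omega(E)$, which is (3).

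The main obstacle is thus the globalisation just described: it is essential to package each decomposable so that every individual factor, and not merely the product, extends to a globally defined fibrewise entire vector field, and this is exactly what the auxiliary functions $\sigma_\alpha$ achieve. The remaining work --- checking that the extensions by zero are defined on genuine tubular neighbourhoods and tracking the various domains of definition --- I expect to be routine.
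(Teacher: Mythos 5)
Your proof is correct. For this particular lemma the paper offers no argument at all: it is stated as a direct analogue of the preceding lemma on vector fields, whose own proof is the one-line remark that fibrewise entireness is local over $C$ and can be checked in coordinates. Your equivalences (1)$\Leftrightarrow$(2) and (3)$\Rightarrow$(2) are exactly that routine coordinate check (pairing $Z$ with the fibrewise polynomial coordinate functions $x_i,y_j$ to recover the coefficients, and using that $\mathcal{C}^{\omega}(E)$ is an algebra), so there you are simply filling in what the authors leave implicit. Where you genuinely add content is (2)$\Rightarrow$(3): you correctly identify that the coordinate frame $\partial_{x_i},\partial_{y_j}$ is only chart-local, and your device of pairing the partition of unity $\rho_\alpha$ with bump functions $\sigma_\alpha$ equal to $1$ near $\operatorname{supp}\rho_\alpha$ — so that \emph{each factor} of every decomposable term, not just the product, extends by zero to a global element of $\chi_\omega(E)$ — is a clean and standard way to handle this; the identity $\rho_\alpha Z=\sum_\beta(\rho_\alpha h^\alpha_\beta Y^\alpha_{\beta,1})\wedge(\sigma_\alpha Y^\alpha_{\beta,2})\wedge\cdots\wedge(\sigma_\alpha Y^\alpha_{\beta,k})$ checks out on and off $\operatorname{supp}\rho_\alpha$, and the domain bookkeeping is as routine as you say. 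The only cosmetic caveat is that your sum over $\alpha$ is locally finite rather than finite when $C$ is non-compact; if one insists on a genuinely finite sum one can colour a finite-multiplicity cover and merge the terms coming from charts of the same colour (their supports being disjoint), but nothing in the paper requires this.
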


\begin{remark}
Notice that in particular a Poisson bivector field $\pi$ lies in $\chi^{2}_{\omega}(E)$
iff the Poisson bracket $\{f,g\}=\pi(df,dg)$ of fibrewise polynomial functions lies in $\mathcal{C}^{\omega}(E)$.
\end{remark}

\subsection{Deformation of coisotropic submanifolds}

For more background information and examples, the reader is advised to consult \cite{alancoiso}.

\begin{defi}
A submanifold $C$ of a Poisson manifold $(M,\pi)$ is {\em coisotropic} if the restriction of the bundle map
$$ \pi^{\sharp}: T^*M \to TM, \quad \xi \mapsto \pi(\xi,\cdot)$$
to the conormal bundle $N^*C:=TC^{\circ}$   takes values in $TC$.
\end{defi}

\begin{defi}  An \emph{$L_\infty[1]$-algebra} is a $\ZZ$-graded vector space $W$, equipped with a collection of graded symmetric brackets $(\lambda_k\colon W^{\otimes k} \longrightarrow W)_{k\ge1}$ of degree $1$ which satisfy a collection of quadratic relations \cite{LadaStasheff} called higher Jacobi identities.

The {\em Maurer-Cartan series} of an element $\alpha$ of $W$ of degree $0$ is the infinite sum
$$ \mathrm{MC}(\alpha) := \sum_{k\ge 1} \frac{1}{k!}\lambda_k(\alpha^{\otimes k}).$$
\end{defi}

\begin{remark}\label{remark:L_infty[1]}
\hspace{0cm}
\begin{enumerate}
\item \label{one} Let $E\to C$ be a vector bundle. We denote by $P: \chi^{\bullet}(E)\to \Gamma(\wedge E)$ the map given by restriction to $C$, composed with the projection $\wedge (TE)|_C \to \wedge E$ induced by the splitting $(TE)|_C=E\oplus TC$.
The zero section $C$ of $E$ is coisotropic if and only if the image of $\pi$ under $P$ is zero.
 \item Suppose that $E$ is equipped with a Poisson structure $\pi$ with respect to which
$C$ is coisotropic. As shown in \cite{OP} and \cite{CaFeCo2}, the space $\Gamma(\wedge E)[1]$ is equipped with a canonical $L_\infty[1]$-algebra structure.
We denote the structure maps of this $L_\infty[1]$-algebra by
$$ \lambda_k\colon \Gamma(\wedge E)[1]^{\otimes k} \to \Gamma(\wedge E)[1].$$
Evaluating $\lambda_k$ on $\alpha^{\otimes k}$ for $\alpha \in \Gamma(E)$ yields
$$ \lambda_k(\alpha^{\otimes k}) := P\big([[\dots[\pi,\alpha],\alpha]\dots],\alpha] \big),$$
where $\alpha$ is interpreted as a fibrewise constant vertical vector field on $E$. Hence the Maurer-Cartan series of $\alpha$ reads
$MC(\alpha)=P(e^{[\cdot,\alpha]}\pi)$.
\end{enumerate}
\end{remark}
The aim of this subsection is  to prove:

\begin{thm}\label{main}
Let $E\to C$ be a vector bundle and $\pi$ a fibrewise entire Poisson structure which is defined on a tubular neighborhood $U=\dom(\pi)$ of $C$ in $E$.
Suppose that $C$ is coisotropic with respect to $\pi$.

Given a section $\alpha$ of $E$ for which $\graph(-\alpha)$ is contained in $U$, the Maurer-Cartan series $\mathrm{MC}(\alpha)$ converges
and its limit is $P((\phi^{\alpha})_*\pi)$, where $\phi^\alpha$ the time-$1$-flow of $\alpha$.

Hence, for such $\alpha$, the following two statements are equivalent:
\begin{enumerate}
 \item The graph of $-\alpha$ is a coisotropic submanifold of $(U,\pi)$.
 \item The Maurer-Cartan series $MC(\alpha)$ of $\alpha$ converges to zero.
\end{enumerate}
\end{thm}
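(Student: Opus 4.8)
The plan is to route everything through the time\nobreakdash-$1$ flow $\phi^\alpha$ itself and to upgrade the formal identity $\mathrm{MC}(\alpha)=P(e^{[\,\cdot\,,\alpha]}\pi)$ of Remark \ref{remark:L_infty[1]}(2) to a genuine equality; fibrewise entireness is what makes this possible.

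\textbf{Step 1: the flow and a reformulation.} The section $\alpha$, viewed as a fibrewise constant vertical vector field on $E$, is complete, and its time\nobreakdash-$s$ flow $\phi^\alpha_s$ is the fibrewise translation $e\mapsto e+s\,\alpha(p)$ (with $p$ the base point of $e$), a diffeomorphism of $E$ for every $s$. In particular $\phi^\alpha:=\phi^\alpha_1$ satisfies $\phi^\alpha(\graph(-\alpha))=C$. Since $\graph(-\alpha)\subset U=\dom(\pi)$, the push\nobreakdash-forward $(\phi^\alpha)_*\pi$ is a Poisson bivector on the open neighbourhood $\phi^\alpha(U)$ of $C$, so $P((\phi^\alpha)_*\pi)$ makes sense, and $\phi^\alpha$ is a Poisson diffeomorphism from $(U,\pi)$ onto $(\phi^\alpha(U),(\phi^\alpha)_*\pi)$ carrying $\graph(-\alpha)$ to $C$. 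Hence $\graph(-\alpha)$ is coisotropic in $(U,\pi)$ iff $C$ is coisotropic in $(\phi^\alpha(U),(\phi^\alpha)_*\pi)$, which by the first part of Remark \ref{remark:L_infty[1]} holds iff $P((\phi^\alpha)_*\pi)=0$. So the equivalence of (1) and (2) follows once we prove
$$ \mathrm{MC}(\alpha)=P\big((\phi^\alpha)_*\pi\big), $$
which in particular forces the left\nobreakdash-hand side to converge.

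\textbf{Step 2: the underlying ODE.} Let $D:=[\,\cdot\,,\alpha]$, an operator preserving each $\chi^k_\omega$. From $\phi^\alpha_{s+\epsilon}=\phi^\alpha_\epsilon\circ\phi^\alpha_s$ one gets $\frac{d}{ds}(\phi^\alpha_s)_*\pi=[(\phi^\alpha_s)_*\pi,\alpha]=D\big((\phi^\alpha_s)_*\pi\big)$ wherever this family is defined near $C$ (which is the case for $s$ near $0$), so $\frac{d^k}{ds^k}\big|_{s=0}(\phi^\alpha_s)_*\pi=D^k\pi$. By the second part of Remark \ref{remark:L_infty[1]}, $P(D^k\pi)=\lambda_k(\alpha^{\otimes k})$ for $k\ge1$, while $P(D^0\pi)=P(\pi)=0$ since $C$ is coisotropic. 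Thus for each fixed $x\in C$ the curve $s\mapsto P\big((\phi^\alpha_s)_*\pi\big)|_x\in\wedge^2 E_x$ has Taylor expansion $\sum_{k\ge1}\frac{s^k}{k!}\lambda_k(\alpha^{\otimes k})|_x$ at $s=0$, whose value at $s=1$ is precisely $\mathrm{MC}(\alpha)|_x$. It remains to see that this curve is real\nobreakdash-analytic on an interval containing $[0,1]$ and agrees with its Taylor series there.

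\textbf{Step 3: convergence via fibrewise entireness.} Fix $x\in C$, local coordinates $(x_i)$ near $x$ and fibre coordinates $(y_j)$, and write $\alpha=\sum_j\alpha_j(x)\partial_{y_j}$. Using $\phi^\alpha_s(x,y)=(x,y+s\alpha(x))$ one checks that, in the induced frame on $\wedge^2 TE$, the components of $(\phi^\alpha_s)_*\pi$ at $(x,y)$ are finite $\RR$\nobreakdash-linear combinations, with coefficients polynomial in $s$ and in the first\nobreakdash-order jet of $\alpha$ at $x$, of the numbers $h_{ab}(x,\,y-s\alpha(x))$, where the $h_{ab}\in\mathcal{C}^\omega(E)$ are the coefficient functions of $\pi$; applying $P$ sets $y=0$. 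Since each $h_{ab}(x,\cdot)$ is entire on the fibre $E_x$, substituting the affine path $s\mapsto-s\alpha(x)$ yields an everywhere\nobreakdash-convergent power series in $s$, so $s\mapsto P\big((\phi^\alpha_s)_*\pi\big)|_x$ is real\nobreakdash-analytic on all of $\RR$; by Step 2 it therefore equals $\sum_{k\ge1}\frac{s^k}{k!}\lambda_k(\alpha^{\otimes k})|_x$ for every $s$. Setting $s=1$ and letting $x$ vary proves $\mathrm{MC}(\alpha)=P\big((\phi^\alpha)_*\pi\big)$ and its convergence, completing the proof in view of Step 1. (A minor bookkeeping point: to form $(\phi^\alpha_s)_*\pi$ near $C$ for all $s\in[0,1]$ one takes $U$ fibrewise star\nobreakdash-shaped around $C$, which is harmless; alternatively one works directly with the fibrewise power series of the $h_{ab}$, which converge on all of $E_x$ and agree with $\pi$ on the segments $\{(x,-s\alpha(x)):s\in[0,1]\}$ where it is defined.)

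\textbf{Main obstacle.} The real content is Step 3: converting the purely formal statement $(\phi^\alpha_s)_*\pi=e^{sD}\pi$ into an honest identity valid at $s=1$. This is exactly where fibrewise entireness is indispensable; for merely smooth $\pi$ — or even fibrewise real\nobreakdash-analytic $\pi$ with finite fibrewise radius of convergence — the series in $s$ may diverge or converge to the wrong limit, and the theorem fails.
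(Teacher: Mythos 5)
Your proposal is correct, and the second half (reducing the equivalence of (1) and (2) to $P((\phi^\alpha)_*\pi)=0$ via the translation $\phi^\alpha$ and Remark \ref{remark:L_infty[1]}(1)) is exactly the paper's argument. Where you genuinely diverge is in proving $\mathrm{MC}(\alpha)=P((\phi^\alpha)_*\pi)$: the paper (Lemma \ref{lem:help} and Proposition \ref{lem:phipi}) fixes $s=1$, writes $(\phi^\alpha)_*\pi$ in coordinates as $\sum (\phi^{-1})^*h\cdot\phi_*\partial\wedge\phi_*\partial$, identifies $(\phi^{-1})^*h|_C$ with the Taylor series $e^{-\alpha}h$ and $\phi_*\partial$ with the finite sum $e^{[\cdot,\alpha]}\partial$, and reassembles the full series $e^{[\cdot,\alpha]}\pi$ as a Cauchy product; you instead introduce the whole flow $\phi^\alpha_s$, identify the iterated brackets $D^k\pi$ with $\frac{d^k}{ds^k}\big|_{s=0}(\phi^\alpha_s)_*\pi$ via the Lie-derivative ODE, and then use fibrewise entireness to show that $s\mapsto P((\phi^\alpha_s)_*\pi)|_x$ is given by a globally convergent power series in $s$, so that evaluation at $s=1$ gives the claim. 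Your route trades the Cauchy-product bookkeeping for a one-variable analyticity argument, which is arguably cleaner, at the cost of having to justify the ODE $\frac{d}{ds}(\phi^\alpha_s)_*\pi=[(\phi^\alpha_s)_*\pi,\alpha]$ near $C$ and the domain issue for intermediate times $s\in[0,1]$ --- which you do address correctly in your closing parenthetical (and which the paper's own use of Taylor's formula in Lemma \ref{lem:help}(i) glosses over in essentially the same way). Both arguments ultimately rest on the same two facts: pushforward by a fibre translation acts on coefficient functions by composition with the translation, and the fibrewise Taylor series converges at the translated point.
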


\begin{proof}
We know from Proposition \ref{lem:phipi} below that $e^{[\cdot,\alpha]}\pi$ restricted to $C$ converges and the limit is $((\phi^\alpha)_*\pi)|_C$.
This implies that $\mathrm{MC}(\alpha)$ converges as well and the limit is $P((\phi^\alpha)_*\pi)$.

To prove the second part of the theorem, recall that $\phi^\alpha \colon E \to E$ is just translation by $\alpha$. Clearly $\graph(-\alpha)$ is coisotropic (for $\pi$) if{f}  $\phi^{\alpha}(\graph(-\alpha))=C$ is coisotropic for $(\phi^\alpha)_*\pi$.
The latter conditions is equivalent to $P((\phi^\alpha)_*\pi)=0$ by Remark \ref{remark:L_infty[1]} (\ref{one}).
\end{proof}

The equation $\mathrm{MC}(\alpha)=0$ is the Maurer-Cartan equation associated to the $L_\infty[1]$-algebra structure on $\Gamma(\wedge E)[1]$,
see Remark \ref{remark:L_infty[1]}.
 Theorem \ref{main} asserts that for $\alpha$ sufficiently $\mathcal{C}^0$-small,
the Maurer-Cartan equation is well-defined and its solutions correspond to coisotropic submanifolds which are sufficiently close to $C$ in the $\mathcal{C}^{1}$-topology.

\begin{remark}
\hspace{0cm}
\begin{enumerate}
\item The convergence of $\mathrm{MC}(\alpha)$ is meant pointwise, i.e. if we consider the sequence of sections $\beta_n \in \Gamma(E)$ defined by
$$ \beta_n:= \sum_{k=1}^n \frac{1}{k!} \lambda_k(\alpha^{\otimes k}),$$
convergence of $\mathrm{MC}(\alpha)$ means that for each $x\in C$, the sequence $(\beta_n(x))_n \subset E_x$ converges.

\item In \cite[\S5.1]{YaelZ}  the above Theorem is claimed for fibrewise polynomial Poisson bivector fields.
\end{enumerate}
\end{remark}

We devote the rest of this subsection to  Proposition \ref{lem:phipi} and its proof. We first need:
\begin{lemma}\label{lem:help}
\begin{enumerate}
 \item[(i)]
Let $\alpha \in \RR^n$, and denote by $\phi^\alpha \colon \RR^n \to \RR^n$ the time-$1$ flow of $\alpha$, i.e. $\phi^\alpha$ is just translation by $\alpha$.
Let $U$ be a neighborhood of the origin in $\RR^n$,
 such that $\alpha\in U$.

Then, for any entire function $f$ defined on $U$, the series
\begin{equation*}
(e^\alpha f)(0) := \sum_{k\ge 0} \frac{1}{k!} ([\alpha,[\dots[\alpha,[\alpha,f]]\dots]])(0)
\end{equation*}
converges to $((\phi^{\alpha})^*f)(0)$.

\item[(ii)] Let $E\to C'$ be a trivial vector bundle, $(x_i)_{i=1}^m$ coordinates on $C'$ and $(y_j)_{j=1}^n$ fibrewise linear functions on $E$, so that $(x_i,y_j)$ is a coordinate system on $E$.
View $\alpha\in \Gamma(E)$ as a vertical vector field on $E$ and denote by $\phi^\alpha$ its time-$1$-flow.
 Then the series
\begin{equation*}
e^{[\cdot,\alpha]}\pd{x_i} := \sum_{k\ge 0}\frac{1}{n!} [[\dots [[\pd{x_i},\alpha],\alpha]\dots ],\alpha] \quad \textrm{and} \quad e^{[\cdot,\alpha]}\pd{y_j}
\end{equation*}
are finite sums and equal $(\phi^\alpha)_*\pd{x_i}$ and $(\phi^{\alpha})_*\pd{y_j}$, respectively, at all points of $E$.
\end{enumerate}
\end{lemma}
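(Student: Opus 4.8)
The plan is to handle both parts via the single observation that iterating the Schouten bracket against $\alpha$ produces iterated directional derivatives, but the conclusions run in opposite directions: in (i) the flow is along the analytic directions, so the series is a genuine (infinite) Taylor expansion whose convergence is precisely the entireness hypothesis, whereas in (ii) the flow is vertical and $\pd{x_i},\pd{y_j}$ have constant coefficients, so the series terminates.

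For (i), since $\alpha\in\RR^n$ is constant, the bracket of the constant vector field $\alpha=\sum_i\alpha_i\pd{i}$ with a function $g$ is just the directional derivative $\partial_\alpha g=\sum_i\alpha_i\pd{i}g$; hence the $k$-fold nested bracket of $\alpha$ against $f$ is $\partial_\alpha^k f$, and the $N$-th partial sum $\sum_{k=0}^N\tfrac1{k!}(\partial_\alpha^k f)(0)$ is exactly the degree-$N$ Taylor polynomial of $f$ at the origin, evaluated at the point $\alpha$ (a multinomial identity I would verify by a short computation). Since $f$ is fibrewise entire and $\alpha\in U=\dom(f)$, this Taylor series converges to $f$ at $\alpha$, so the partial sums tend to $f(\alpha)=((\phi^\alpha)^*f)(0)$, which is the claim. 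The only things worth spelling out are the multinomial identity and the precise sense in which ``fibrewise entire'' delivers convergence of the Taylor series at the specific point $\alpha$ (equivalently, absolute convergence of the one-variable series $t\mapsto f(t\alpha)$ for $|t|\le 1$); everything else is bookkeeping.

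For (ii), write $\alpha=\sum_j\alpha_j(x)\pd{y_j}$, where the coefficients depend only on the base coordinates because $\alpha$ comes from a section of $E$. The computational heart is threefold: $[\pd{x_i},\alpha]=\sum_j(\pd{x_i}\alpha_j)\pd{y_j}$ is again vertical with coefficients depending only on $x$; $[\pd{y_j},\alpha]=0$; and the Lie bracket of any two vertical vector fields whose coefficients depend only on $x$ vanishes. Consequently $[[\pd{x_i},\alpha],\alpha]=0$ and all higher brackets vanish, so $e^{[\cdot,\alpha]}\pd{x_i}=\pd{x_i}+\sum_j(\pd{x_i}\alpha_j)\pd{y_j}$ and $e^{[\cdot,\alpha]}\pd{y_j}=\pd{y_j}$, both finite sums. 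Finally I would match these against the pushforwards by differentiating $\phi^\alpha(x,y)=(x,y+\alpha(x))$: one gets $(\phi^\alpha)_*\pd{y_j}=\pd{y_j}$ and $(\phi^\alpha)_*\pd{x_i}=\pd{x_i}+\sum_j(\pd{x_i}\alpha_j)\pd{y_j}$, the correction being the chain-rule contribution of the components $y_j+\alpha_j(x)$. There is no real obstacle here beyond keeping the direction of the pushforward straight; alternatively one can integrate the ODE $\tfrac{d}{dt}(\phi^{t\alpha})_*X=(\phi^{t\alpha})_*[X,\alpha]$, whose right-hand side becomes $t$-independent once $[X,\alpha]$ is reached (because $[[X,\alpha],\alpha]=0$), forcing $(\phi^{t\alpha})_*X$ to be affine in $t$ and giving the same value at $t=1$.
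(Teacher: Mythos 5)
Your argument is correct and takes essentially the same route as the paper: part (ii) is the identical coordinate computation, and part (i) is the same reduction to Taylor's formula along the direction of $\alpha$, the only cosmetic difference being that the paper normalizes $\alpha=\partial_{y_1}$ by a choice of coordinates rather than invoking the multinomial identity for $\partial_\alpha^k f$. No gaps.
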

\begin{proof}
(i) Denote by $(y_j)_{j=1}^n$ the canonical coordinates on $\RR^n$. We may assume that $\alpha=\pd{y_1}$.  We have

$$  (e^\alpha f)(0) = \sum_{n=0}^{\infty}\frac{1}{n!} ((\pd{y_1})^nf)|_0 = f(0+e_1) = ((\phi^{\alpha})^*f)(0). $$

where $e_1$ is first basis vector and we used Taylor's formula in the second equality.

(ii) Let us write in coordinates $\alpha=\sum_{j=1}^n f_j(x)\pd{y_j}$. Then
$[\pd{x_i},\alpha]=\sum_{j=1}^n \frac{\partial f_j}{\partial x_i}(x)\pd{y_j}$, so in particular the series $e^{[\cdot,\alpha]}\pd{x_i}$ is finite,
more precisely
$$e^{[\cdot,\alpha]}\pd{x_i}=\pd{x_i}+[\pd{x_i},\alpha] = \pd{x_i} + \sum_{j=1}^n \frac{\partial f_j}{\partial x_i}(x)\pd{y_j}.$$
On the other hand,
$$(\phi^\alpha)_*\pd{x_i}=\pd{x_i}+\sum_{j=1}^n \frac{\partial f_j}{\partial x_i}(x)\pd{y_j}.$$
The same reasoning applies to $e^{[\cdot,\alpha]}\pd{y_j}$.
\end{proof}

\begin{prop}\label{lem:phipi}
Let $E\to C$ be  vector bundle, and
 let $\pi\in \chi^{2}_{\omega}(E)$ be a fibrewise entire bivector field, defined on a tubular neighborhood $U=\dom(\pi)$ of $C$.
Let $\alpha \in \Gamma(E)$ such that $\graph(-\alpha)$ is contained in $U$.

Then the series
\begin{equation*}
e^{[\cdot,\alpha]}\pi := \sum_{k\ge 0} \frac{1}{k!} [[\dots [[\pi,\alpha],\alpha]\dots ],\alpha]
\end{equation*}
converges pointwise
on $C$ towards $((\phi^\alpha)_*\pi)|_C$, where $\phi^\alpha$ is the time-$1$-flow of $\alpha$.

\end{prop}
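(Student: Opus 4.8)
The plan is to reduce to a local coordinate computation, expand $\pi$, and observe that after reorganizing the exponential series the only genuinely infinite part involves the fibrewise entire coefficient functions of $\pi$, to which Lemma~\ref{lem:help}(i) applies, while the ``vector field part'' contributes only finite sums by Lemma~\ref{lem:help}(ii). Since being fibrewise entire and the whole assertion are local over $C$, I would fix $x_0\in C$ together with a chart $W\ni x_0$ trivializing $E$, with base coordinates $(x_i)$ and fibre coordinates $(y_j)$; it then suffices to show that $(e^{[\cdot,\alpha]}\pi)(x_0,0)$ converges to $((\phi^\alpha)_*\pi)(x_0,0)$. On $E|_W$ write $\pi=\sum_{a<b}h_{ab}\,\partial_a\wedge\partial_b$, where each $\partial_a\in\{\partial_{x_i},\partial_{y_j}\}$ and $h_{ab}\in\mathcal C^\omega(E|_W)$ by the coordinate description of fibrewise entire bivector fields. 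The operator $D:=[\,\cdot\,,\alpha]$ is, up to sign, the Lie derivative along the vertical vector field $\alpha$; in particular it is a derivation of the exterior product of multivector fields and of the module structure of multivector fields over functions, with $Dh=-\alpha(h)$ on functions.

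Using that $D$ is a derivation together with the Leibniz rule $D^n(hV)=\sum_k\binom{n}{k}(D^kh)(D^{n-k}V)$, I would establish the identity
$$ e^{D}\bigl(h\,\partial_a\wedge\partial_b\bigr)=\bigl(e^{D}h\bigr)\,\bigl(e^{D}\partial_a\bigr)\wedge\bigl(e^{D}\partial_b\bigr). $$
The key simplification is that, by Lemma~\ref{lem:help}(ii), $e^{D}\partial_{x_i}=\partial_{x_i}+[\partial_{x_i},\alpha]$ and $e^{D}\partial_{y_j}=\partial_{y_j}$ are \emph{finite} sums, equal respectively to $(\phi^\alpha)_*\partial_{x_i}$ and $(\phi^\alpha)_*\partial_{y_j}$; hence $e^{D}(\partial_a\wedge\partial_b)$ is a finite sum and equals $(\phi^\alpha)_*(\partial_a\wedge\partial_b)$. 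Because only a bounded number of brackets can hit the coordinate bivector $\partial_a\wedge\partial_b$ before annihilating it, the triple sum defining $e^{D}(h\,\partial_a\wedge\partial_b)$ reorganizes as a \emph{finite} sum of series of the shape $(e^{D}h)\cdot(\text{fixed coordinate multivector})$; so the reorganization is unproblematic, and the only infinite series left to control is $e^{D}h$.

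Finally, for $h\in\mathcal C^\omega$ I would show that $(e^{D}h)(x_0,0)$ converges to $h(x_0,-\alpha(x_0))=((\phi^\alpha)_*h)(x_0,0)$. Since $\alpha$ is vertical with coefficients depending only on the base, $\tfrac{1}{k!}(D^kh)$ evaluated at $(x_0,0)$ is precisely the degree-$k$ term of the Taylor expansion of $y\mapsto h(x_0,y)$ about $0$, evaluated at the increment $-\alpha(x_0)$; and $\graph(-\alpha)\subset U$ forces $(x_0,-\alpha(x_0))\in U$, i.e. $-\alpha(x_0)$ lies in the fibre domain of $h$ over $x_0$, so Lemma~\ref{lem:help}(i), applied on $E_{x_0}\cong\RR^n$ with translation vector $-\alpha(x_0)$, yields the claimed convergence. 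Summing over the finitely many coordinate monomials of $\pi$ and using that pushforward by the diffeomorphism $\phi^\alpha$ distributes over products and wedges, one obtains $(e^{[\cdot,\alpha]}\pi)(x_0,0)=(\phi^\alpha)_*\bigl(\sum_{a<b}h_{ab}\,\partial_a\wedge\partial_b\bigr)(x_0,0)=((\phi^\alpha)_*\pi)(x_0,0)$. The main obstacle is precisely this interplay: turning the formal identity $e^{[\cdot,\alpha]}\pi=(\phi^\alpha)_*\pi$ into a rigorous statement by verifying that the only divergence-prone piece is the coefficient function — which the fibrewise entire hypothesis controls via Lemma~\ref{lem:help}(i) — while Lemma~\ref{lem:help}(ii) ensures the vector-field part contributes only finite sums.
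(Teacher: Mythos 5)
Your proposal is correct and follows essentially the same route as the paper: work in local coordinates, split $\pi$ into coordinate bivectors with fibrewise entire coefficients, observe that $e^{[\cdot,\alpha]}$ applied to the coordinate vector fields is a finite sum equal to $(\phi^\alpha)_*$ (Lemma~\ref{lem:help}(ii)), and reduce the only infinite series to $e^{-\alpha}h$ on the entire coefficient functions, handled by Lemma~\ref{lem:help}(i). Your Leibniz-rule factorization $e^{D}(h\,\partial_a\wedge\partial_b)=(e^{D}h)\,(e^{D}\partial_a)\wedge(e^{D}\partial_b)$ is exactly the Cauchy-product argument the paper uses.
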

\begin{proof}
Choose local coordinates $(x_i)_{i=1}^m$ on $W\subset C$ and fibrewise linear functions $(y_j)_{j=1}^n$ on $E|_W$, so that $(x_i,y_j)$ is a coordinate system on $E|_W$. On the open subset $U|_W$ of $E$, write $\pi$ in these coordinates:
$$\pi= E + F + G = \sum_{i,i'}g_{ii'}(x,y)\pd{x_i}\wedge \pd{x_{i'}}+\sum_{i,j}h_{ij}(x,y)\pd{x_i}\wedge \pd{y_j}+\sum_{j,j'}k_{jj'}(x,y)\pd{y_j}\wedge \pd{y_{j'}}.$$
Since $\pi$ is fibrewise entire, the functions $g_{ii'}(x,y),h_{ij}(x,y),k_{jj'}(x,y)$ are in $\mathcal{C}^{\omega}(U|_W)$.

Hence, in a neighborhood of $W$ in $U|_W$, the pushforward bivector field $(\phi^{\alpha})_*\pi$ is equal to 
\begin{eqnarray*}
\sum_{i,i'}(\phi^{-1})^*g_{ii'}\cdot\phi_*\pd{x_i}\wedge \phi_*\pd{x_{i'}}+\sum_{i,j}(\phi^{-1})^*h_{ij}\cdot\phi_*\pd{x_i}\wedge \phi_*\pd{y_j} +\sum_{j,j'}(\phi^{-1})^*k_{jj'}\cdot\phi_*\pd{y_j}\wedge \phi_*\pd{y_{j'}},
\end{eqnarray*}
where we write $\phi:=\phi^{\alpha}$.
Restriction to $C$ and Lemma \ref{lem:help} yield, for all $x\in W$,
\begin{align*}
\big((\phi^{\alpha})_*\pi\big)|_{(x,0)} =  E_x' + F_x' + G_x' = \sum_{i,i'}(e^{-{\alpha}}g_{ii'})(x,0)\cdot e^{[\cdot,{\alpha}]}\pd{x_i}|_{(x,0)}\wedge e^{[\cdot,{\alpha}]}\pd{x_{i'}}|_{(x,0)} + \cdots.
\end{align*}
We claim that
$$ E_x' = (e^{[\cdot,\alpha]}E)|_{(x,0)}, \quad F_x' = (e^{[\cdot,\alpha]}F)|_{(x,0)} \quad \textrm{and} \quad G_x' = (e^{[\cdot,\alpha]}G)|_{(x,0)},$$
and hence $\big((\phi^\alpha)_*\pi\big)|_{(x,0)} = \big(e^{[\cdot,\alpha]} \pi \big)|_{(x,0)}$.

Observe that
the power series $e^{[\cdot,\alpha]}(g\pd{x_i})$ can be written as the Cauchy-product of the power series
$e^{-\alpha}g$ and the finite sum $e^{[\cdot,\alpha]}\pd{x_i}$. Hence $\big(e^{[\cdot,\alpha]}(g\pd{x_i})\big)|_{(x,0)}$ converges
to $\big(e^{-\alpha}g\big)|_{(x,0)} \cdot \big(e^{[\cdot,\alpha]}\pd{x_i}\big)|_{(x,0)}$. The analogous statement for
$g\pd{y_j}$ holds as well.
The claims about $E'_x$, $F'_x$ and $G'_x$ immediately follow.

\end{proof}

\section{The symplectic case}\label{sec:sym}

Throughout this section, $C$ is a coisotropic submanifold of a symplectic manifold $(M,\omega)$.
We first recall Gotay's normal form theorem for coisotropic submanifolds inside symplectic manifolds from \cite{Gotay}.
Then we show that it allows us to apply Theorem \ref{main} to recover the fact that, in the symplectic world, the coisotropic submanifolds sufficiently close to a given one are encoded by an $\li[1]$-algebra
(Theorem \ref{thm:symplectic_case}).

\begin{defi}
A 2-form   on a manifold  is {\em pre-symplectic} if it is closed and its kernel has constant rank.
\end{defi}

\begin{remark}
Let $C$ be a coisotropic submanifold of $(M,\omega)$. The pullback of $\omega$ under the inclusion $C \hookrightarrow M$ is
a pre-symplectic form which we denote by $\omega_C$.

On the other hand, starting from a pre-symplectic manifold $(C,\omega_C)$ one can construct a symplectic manifold $(\tilde{C},\tilde{\omega})$
which contains $C$ as a coisotropic submanifold in such a way that $\tilde{\omega}$ pulls back to $\omega_C$. The construction works as follows: Denote the kernel of $\omega_C$ by $E$ and its dual by $\pi: E^*\to C$. Fixing a complement $G$ of $E$ inside $TC$ yields
an inclusion $j: E^* \to T^*C$. The space $E^*$ carries a two-form
$$\Omega:= \pi^*\omega_C + j^* \omega_{T^*C}.$$
Here $\omega_{T^*C}$ denotes the canonical symplectic form on the cotangent bundle.
It is straightforward to check that $\Omega$ pulls back to $\omega_C$ and that it is symplectic on a tubular neighborhood $U$ of the zero section $C \subset E^*$. We set $(\tilde{C},\tilde{\omega})$ equal to $(U,\Omega)$ and refer to it as the {\em local model} associated to the
the pre-symplectic manifold $(C,\omega_C)$.
\end{remark}

\begin{thm}[Gotay \cite{Gotay}]\label{thm:Gotay}
Let $C$ be a coisotropic submanifold of a symplectic manifold $(M,\omega)$.

There is a symplectomorphism $\psi$ between a tubular neighborhood of $C$ inside $M$ and a tubular neighborhood of $C$ inside its local model
$(\tilde{C},\tilde{\omega})$. Moreover, the restriction of $\psi$ to $C$ is the identity.
\end{thm}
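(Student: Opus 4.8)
The plan is to run Moser's deformation argument after reducing, via the tubular neighbourhood theorem, to a comparison of two symplectic forms on a neighbourhood of $C$ inside $E^{*}$. Since $C$ is coisotropic, $\omega^{\sharp}$ restricts to an isomorphism of the conormal bundle $N^{*}C$ onto $TC^{\omega}=\ker(\omega_{C})=E$, so dualizing identifies the normal bundle $NC$ canonically with $E^{*}$. I would choose a tubular neighbourhood embedding of $C$ in $M$ compatibly with this identification, obtaining a diffeomorphism $\varphi$ from a neighbourhood of $C$ in $M$ onto a neighbourhood of the zero section in $E^{*}$ with $\varphi|_{C}=\mathrm{id}_{C}$ and whose normal derivative along $C$ is the isomorphism $NC\cong E^{*}$. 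Putting $\omega_{1}:=(\varphi^{-1})^{*}\omega$ and $\omega_{0}:=\Omega$, it suffices to construct a diffeomorphism $\chi$ of a neighbourhood of $C$ in $E^{*}$ with $\chi|_{C}=\mathrm{id}$ and $\chi^{*}\omega_{1}=\omega_{0}$; then $\psi:=\chi^{-1}\circ\varphi$ satisfies $\psi^{*}\Omega=\omega$ and $\psi|_{C}=\mathrm{id}$. Note that both $\omega_{0}$ and $\omega_{1}$ pull back to $\omega_{C}$ under $C\hookrightarrow E^{*}$, so $\tau:=\omega_{1}-\omega_{0}$ is closed with vanishing pullback to $C$.

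The crucial step — and the one I expect to be the main obstacle — is to arrange that $\omega_{0}$ and $\omega_{1}$ agree at every point of $C$, not just after pullback. At $x\in C$ both restrict to $(\omega_{C})_{x}$ on the coisotropic subspace $T_{x}C$, so the only possible discrepancies are in the component pairing $T_{x}C$ with the normal direction and in the purely normal component. The mixed component of $\omega_{0}=\Omega$ along $C$ is precisely the canonical pairing contributed by $j^{*}\omega_{T^{*}C}$ at the zero section, whereas that of $\omega_{1}$ is the pairing induced by $\omega$ under the chosen normal identification; by the compatibility built into $\varphi$ these coincide. The purely normal components need not agree, but this is harmless: a direct linear-algebra check shows that if two symplectic forms on a vector space share their restriction to a coisotropic subspace $W$ and share their mixed term, then every convex combination $\omega_{t}:=\omega_{0}+t\tau$, $t\in[0,1]$, is again non-degenerate — a kernel vector is forced into $W$ by the mixed term and then forced to be zero because $W^{*}$ splits as the image of the restriction plus the image of the mixed pairing. (Alternatively one can invoke the pointwise linear symplectic normal form along $C$ to produce, smoothly in $x$, a linear automorphism of $T_{x}E^{*}$ fixing $T_{x}C$ and carrying $(\omega_{1})_{x}$ to $(\omega_{0})_{x}$, realize it by a diffeomorphism fixing $C$, and reduce to the case in which the two forms agree pointwise on $C$.)

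With the forms normalized along $C$ the rest is routine. The relative Poincar\'e lemma, applied with the fibrewise scaling retraction of a tubular neighbourhood of $C$ in $E^{*}$, gives a $1$-form $\sigma$ with $d\sigma=\tau$ and $\sigma_{x}=0$ for every $x\in C$. Each $\omega_{t}$ is symplectic along $C$, hence on a neighbourhood of $C$, uniformly for $t\in[0,1]$ after shrinking. I would then solve $\iota_{X_{t}}\omega_{t}=-\sigma$ for a time-dependent vector field $X_{t}$; it vanishes along $C$, so its flow $\chi_{t}$ exists for $t\in[0,1]$ near $C$, fixes $C$ pointwise, and satisfies $\frac{d}{dt}\chi_{t}^{*}\omega_{t}=\chi_{t}^{*}\bigl(d\iota_{X_{t}}\omega_{t}+\tau\bigr)=0$, so $\chi:=\chi_{1}$ works. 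The only genuinely non-formal ingredient is thus the first-order matching of $\omega$ with $\Omega$ along $C$, which is exactly where the construction of the local model $(\tilde{C},\tilde{\omega})$ is used; the tubular neighbourhood theorem, the relative Poincar\'e lemma, and Moser's argument supply everything else.
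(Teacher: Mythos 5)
The paper does not prove this statement at all --- it is quoted as Gotay's coisotropic embedding theorem with a citation to \cite{Gotay} --- and your relative Moser argument (normalize the two forms along $C$, apply the relative Poincar\'e lemma to get a primitive vanishing on $C$, flow) is precisely Gotay's original strategy and is essentially correct. The one step that deserves more care than your phrase ``by the compatibility built into $\varphi$'' suggests is the matching of the mixed terms along $C$: for $u$ in a complement $G_x$ of $E_x$ in $T_xC$ the value $\omega_x(u,v)$ depends on the chosen representative $v$ of the normal class, so one must pick the normal complement $V_x$ inside $(G_x)^{\omega}$ (say, a Lagrangian complement of $E_x$ in the symplectic subbundle $G^{\omega}$, which also makes $V$ isotropic and kills the purely normal term); your parenthetical fallback via the pointwise linear normal form, or your convexity lemma once the mixed terms agree, then closes the argument.
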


\begin{defi}\label{defi:fibrewise_homogeneous}
A differential form $\omega$ on a vector bundle $E\to C$ is called {\em fibrewise homogeneous of degree $k$}
if the following holds: given any local coordinates $(x_i)_{i=1}^m$ on $W\subset C$ and any fibre coordinates $(y_j)_{j=1}^n$
on $E|_{W}$, the differential form $\omega$
reads
$$ \sum \omega_{i_1\cdots i_r j_1\cdots j_s}(x,y)dx_{i_1}\cdots dx_{i_r}dy_{j_1}\cdots dy_{j_s}$$
on $E|_W$ with $\omega_{i_1\cdots i_r j_1\cdots j_s}(x,y)$ monomials in the fibre coordinates such that
$$\textrm{deg}(\omega_{i_1\cdots i_r j_1\cdots j_s}) + s = k.$$
In other words, the number of $y$'s and $dy$'s appearing in each summand is exactly $k$.

We denote the vector space of fibrewise homogeneous differential forms of degree $k$ on $E$ by $\Omega_{(k)}(E)$
and set $\Omega_{(\le k)}(E) := \oplus_{l\le k} \Omega_{(l)}(E)$.
\end{defi}

\begin{remark}
\hspace{0cm}
\begin{enumerate}
 \item
If a differential form $\omega$ on a vector bundle $E$ satisfies the condition of Definition \ref{defi:fibrewise_homogeneous} with respect to some
coordinate system and some choice of fibrewise linear coordinates, it also satisfies the condition with respect to any other choice of
coordinate system and fibre coordinates defined on the same open.

Hence it suffices to check the condition of Definition \ref{defi:fibrewise_homogeneous}
for an atlas $(W_{i},\phi_i)$ of $C$ and a collection of trivializations of $E|_{W_i}$.
\item The space $\Omega_{(0)}(E)$ coincides with the image of the pullback $\pi^*: \Omega(C) \to \Omega(E)$.
\item The spaces $\Omega_{(k)}(E)$ are closed under the de Rham
differential and the pullback along vector bundle maps.
\item Clearly, one can extend Definition \ref{defi:fibrewise_homogeneous} to $\Omega_{(k)}(U)$ and $\Omega_{(\le k)}(U)$
for $U$ some tubular neighborhood of $C \subset E$.
\end{enumerate}

\end{remark}

\begin{lemma}\label{lemma:symplectic_case}
Let $C$ be a coisotropic submanifold of a symplectic manifold $(M,\omega)$.

There is a diffeomorphism between a tubular neighborhood $V$ of $C$ inside $NC:=TM|_C/TC$ and a tubular neighborhood of $C$ inside $M$
such that the pullback of $\omega$ to $V$ lies in $\Omega_{(\le 1)}(V).$
\end{lemma}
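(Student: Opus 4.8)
The plan is to transport the problem to the local model of the pre-symplectic manifold $(C,\omega_C)$ via Gotay's Theorem \ref{thm:Gotay}, and to observe that the model two-form $\Omega$ is already fibrewise homogeneous of degree at most one.

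First, I would identify the normal bundle with the dual of the kernel of $\omega_C$. Since $C$ is coisotropic, the symplectic orthogonal $TC^{\perp}$ is contained in $TC$, so the kernel $E$ of $\omega_C$ coincides with $TC^{\perp}$; the bundle map $\omega^{\flat}\colon TM\to T^*M$ then restricts to an isomorphism $E\xrightarrow{\ \sim\ }N^*C$, and dualizing gives a vector bundle isomorphism $E^*\cong NC$. (Any vector bundle isomorphism $E^*\cong NC$ would suffice for the statement; this one is canonical.)

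Next, I would show that the model form lies in $\Omega_{(\le1)}$. Recall the local model is $(\tilde C,\tilde\omega)=(U,\Omega)$ with $U\subset E^*$ a tubular neighborhood of $C$ and $\Omega=\pi^*\omega_C+j^*\omega_{T^*C}$. The first summand is a pullback along $\pi\colon E^*\to C$, so it lies in $\Omega_{(0)}(E^*)$ by item (2) of the Remark following Definition \ref{defi:fibrewise_homogeneous}. For the second summand, observe that in cotangent coordinates $(q_i,p_i)$ on $T^*C$ the canonical symplectic form is $\sum_i dp_i\wedge dq_i$; since the fibre coordinates of $T^*C\to C$ are the $p_i$, each term contains exactly one $dp_i$ and no factor $p_i$, whence $\omega_{T^*C}\in\Omega_{(1)}(T^*C)$. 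As $j\colon E^*\hookrightarrow T^*C$ is a vector bundle map, item (3) of the same Remark gives $j^*\omega_{T^*C}\in\Omega_{(1)}(E^*)$; equivalently, in local coordinates $j^*\theta=\sum_a(\sum_k A_{ak}(x)y_k)\,dx_a$ is visibly of fibrewise degree $1$ and $\Omega_{(1)}$ is closed under the de Rham differential. Restricting to $U$ and invoking item (4) of the Remark, $\Omega\in\Omega_{(\le1)}(U)$.

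Finally, I would assemble the diffeomorphism. By Gotay's Theorem \ref{thm:Gotay} there is a symplectomorphism $\psi$ from a tubular neighborhood of $C$ in $M$ onto a tubular neighborhood of $C$ in $(\tilde C,\tilde\omega)$, restricting to the identity on $C$. Composing $\psi^{-1}$ with the restriction of the bundle isomorphism $NC\cong E^*$ yields a diffeomorphism $F$ from a tubular neighborhood $V$ of $C$ in $NC$ onto a tubular neighborhood of $C$ in $M$; since $\psi$ is a symplectomorphism and a bundle isomorphism is in particular a vector bundle map, $F^*\omega$ is the pullback of $\Omega\in\Omega_{(\le1)}$ along a vector bundle map, hence lies in $\Omega_{(\le1)}(V)$ by items (3) and (4) of the Remark. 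The hard part is really just the careful verification that $j^*\omega_{T^*C}$ is fibrewise homogeneous of degree \emph{exactly} one --- i.e. that the fibrewise-linear inclusion $j$ respects the fibrewise grading --- together with the bookkeeping ensuring that all the identifications used are genuine vector bundle maps on the relevant tubular neighborhoods, so that the stability properties of $\Omega_{(\le1)}$ recorded in the Remark after Definition \ref{defi:fibrewise_homogeneous} actually apply.
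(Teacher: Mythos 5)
Your proposal is correct and follows essentially the same route as the paper: reduce to the local model via Gotay's theorem, note that $\pi^*\omega_C\in\Omega_{(0)}$ and that $\omega_{T^*C}$ in canonical coordinates lies in $\Omega_{(1)}$, hence $j^*\omega_{T^*C}\in\Omega_{(1)}$. The only difference is that you spell out the canonical identification $NC\cong E^*$ (via $\omega^{\flat}\colon TC^{\perp}\xrightarrow{\sim}N^*C$) and the grading bookkeeping, both of which the paper leaves implicit.
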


\begin{proof}
Let $C$ be a coisotropic submanifold with local model $(U,\Omega)$.
By Theorem \ref{thm:Gotay}, it is enough to prove that the symplectic form $\Omega$ lies in $\Omega_{\le 1}(U)$. By definition
$$\Omega = \pi^*\omega_C + j^*\omega_{T^*C}.$$
Clearly, $\pi^*\omega_C$ lies in $\Omega_{(0)}(U)$. Further, writing $\omega_{T^*C}$ in canonical coordinates shows that it lies in
$\Omega_{(1)}(T^*C)$, hence $j^*\omega_{T^*C}\in \Omega_{(1)}(U)$.
\end{proof}

\begin{cor}\label{cor:sym}
Let $C$ be a coisotropic submanifold of a symplectic manifold $(M,\omega)$.

There is a diffeomorphism between a tubular neighborhood   $V$ of $C$ inside $NC$ and a tubular neighborhood of $C$ inside $M$
such that the pullback of the Poisson structure $\omega^{-1}$ to $V$ is a fibrewise entire Poisson structure on $NC$.
\end{cor}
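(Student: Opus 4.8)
The plan is to reduce the statement to Lemma \ref{lemma:symplectic_case} by a short linear-algebra-plus-analysis argument. First I would apply Lemma \ref{lemma:symplectic_case} to fix a tubular neighborhood $V_0$ of $C$ in $NC$ together with a diffeomorphism $\Phi$ from $V_0$ onto a tubular neighborhood of $C$ in $M$, restricting to the identity on $C$, such that $\omega_0 := \Phi^*\omega$ lies in $\Omega_{(\le 1)}(V_0)$. Since $\Phi$ is a diffeomorphism onto an open subset of $M$ and $\omega$ is symplectic, $\omega_0$ is a symplectic form on $V_0$; hence its inverse bivector field $\pi_0 := \omega_0^{-1}$ is a Poisson structure, it is precisely the pullback of $\omega^{-1}$ to $V_0$, and $\Phi$ becomes a Poisson diffeomorphism fixing $C$. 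So everything reduces to showing that $\pi_0$ is fibrewise entire on a possibly smaller tubular neighborhood.

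Next I would work in adapted local coordinates $(x_i)_{i=1}^m$ on $W\subset C$ and $(y_j)_{j=1}^n$ fibrewise linear on $NC|_W$. The condition $\omega_0 \in \Omega_{(\le 1)}$ forces the coefficient matrix $\Omega(x,y)$ of $\omega_0$ in the frame $dx_i, dy_j$ to be skew-symmetric with entries that are \emph{affine} in $y$ and smooth in $x$ — in fact the $dy\wedge dy$-block vanishes and the $dx\wedge dy$-block is $y$-independent, while the $dx\wedge dx$-block is affine in $y$. Cramer's rule then expresses the coefficient matrix of $\pi_0 = \omega_0^{-1}$ as $\Omega(x,y)^{-1}$, whose entries are rational in $y$ of the form $p_{ij}(x,y)/\det\Omega(x,y)$, with $p_{ij}$ and $\det\Omega$ polynomial in $y$ and smooth in $x$; moreover $\det\Omega(x,y)\neq 0$ on $V_0$ by nondegeneracy, so in particular $\det\Omega(x,0)\neq 0$ for all $x$.

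The main obstacle — and essentially the only content beyond bookkeeping — is that a rational function with nowhere-vanishing denominator on a fibre need not be given by a \emph{globally convergent} power series on that fibre, as required by the definition of $\mathcal{C}^{\omega}(NC)$: the radius of convergence of $1/\det\Omega(x,\cdot)$ around $0$ is governed by the nearest complex zero of the polynomial $y\mapsto\det\Omega(x,y)$, which may lie inside $V_0$. To handle this I would establish a locally uniform lower bound: on a compact set $K$ in the base one has $|\det\Omega(\cdot,0)|\geq\delta>0$ while the remaining coefficients of $\det\Omega$ as a polynomial in $y$ are bounded, so there is $r>0$ such that $y\mapsto\det\Omega(x,y)$ has no zero in the complex polydisc $\{|y|<r\}$, uniformly for $x\in K$; there each $p_{ij}(x,\cdot)/\det\Omega(x,\cdot)$ is holomorphic, hence equals its convergent Taylor series. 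A standard shrinking (partition-of-unity) argument for tubular neighborhoods then yields a tubular neighborhood $V\subseteq V_0$ of $C$ in $NC$ whose fibres sit inside these polydiscs of convergence. On $V$ the coefficients of $\pi_0$ lie in $\mathcal{C}^{\omega}(NC)$, so $\pi_0|_V$ is a fibrewise entire Poisson structure and $\Phi|_V$ still restricts to the identity on $C$, which is the assertion.
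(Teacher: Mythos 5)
Your proof is correct and follows the same route as the paper: both reduce via Lemma \ref{lemma:symplectic_case} to the problem of inverting a nondegenerate matrix whose entries are affine in the fibre coordinates. The only difference is the final analytic step --- the paper expands $M^{-1}(\lambda)=\sum_{r\ge 0}\bigl(-\sum_k \lambda_k A^{-1}B_k\bigr)^r A^{-1}$ as a Neumann series convergent near the origin, whereas you use Cramer's rule together with holomorphy of $1/\det$ on a uniform polydisc; your version is somewhat more explicit about shrinking the tubular neighbourhood so that each fibre lies inside the region of convergence, a point the paper leaves implicit.
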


\begin{proof}
Let $C$ be a coisotropic submanifold with local model $(U,\Omega)$.
By Lemma \ref{lemma:symplectic_case}, it suffices to prove that the inverse $\Omega^{-1}$ of the symplectic form
yields a fibrewise entire Poisson structure.

Since $\Omega$ lies in $\Omega_{(\le 1)}(U)$, it reads
$$ \sum_{1\le i < j \le m} \left(f_{ij}(x) + \sum_{k=1}^{{n}}y_k g^{k}_{ij}{(x)}\right) dx_i dx_j +
\sum_{\substack{1\le i\le m\\1\le k \le n}} h_{ik}(x) dx_{i} dy_k$$
on $E|_W$, where $(x_i)_{i=1}^m$ are coordinates on $W \subset C$ and $(y_{{k}})_{k=1}^n$ are fibre coordinates on $E|_W$.

We define square matrices $A$, $B_k$ of size $m+n$
by writing them as follows in block-form:
$$A= \left(\begin{array}{c|c} f(x) & h(x) \\\hline -h(x)^T & 0\end{array}\right),\;\;\;\;\;\;\;B_k=\left(\begin{array}{c|c} g^k(x) & 0 \\\hline 0 & 0\end{array}\right).$$ It is clear that the problem of determining the dependence of $\Omega^{-1}$ on the fibre coordinates reduces to the following problem:
Given an invertible  matrix $A$ and a tuple of  matrices $(B_1,\dots,B_n)$ of the same size, define
$$ M(\lambda) := A + \sum_{k=1}^n \lambda_k B_k$$
 for $\lambda=(\lambda_1,\dots,\lambda_{n})$ sufficiently close to the origin of $\RR^n$, and show that the function
$$ \lambda \mapsto M^{-1}(\lambda)$$
is entire on an open neighborhood of the origin.

This in turn holds since the general linear group   is analytic. More explicitly, using the matrix version of the geometric series $(1-x)^{-1}=\sum_{r=0}^{\infty}x^r$, one has
$$M^{-1}(\lambda)=\sum_{r=0}^{\infty}(-\sum_{k=1}^n \lambda_k A^{-1}B_k)^r A^{-1},$$
which is clearly entire near the origin.
\end{proof}

Thanks to Corollary \ref{cor:sym} we can apply Theorem \ref{main} and  recover the following result, which is -- partly in an implicit manner -- contained in \cite{OP}:

\begin{thm}\label{thm:symplectic_case}
Let $C$ be a coisotropic submanifold of a symplectic manifold $(M,\omega)$.

There is a diffeomorphism $\psi$ between a tubular neighborhood $V$ of $C$ inside $NC$ and a tubular neighborhood of $C$ inside $M$
such that for any $\alpha \in \Gamma(NC)$, with $\graph(-\alpha)$ contained in $(V,\psi^*\omega)$,
the Maurer-Cartan series $MC(\alpha)$ is convergent.

Furthermore, for any such $\alpha$ the following two statements are equivalent:
\begin{enumerate}
 \item The graph of $-\alpha$ is a coisotropic submanifold of $(V,\psi^*\omega)$.
 \item The Maurer-Cartan series $MC(\alpha)$ of $\alpha$ converges to zero.
\end{enumerate}
\end{thm}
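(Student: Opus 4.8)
The plan is to reduce Theorem \ref{thm:symplectic_case} to a direct application of Theorem \ref{main}. By Corollary \ref{cor:sym}, there is a diffeomorphism $\psi$ between a tubular neighbourhood $V$ of $C$ in $NC$ and a tubular neighbourhood of $C$ in $M$ such that the pulled-back Poisson structure $\pi := \psi^*(\omega^{-1})$ is a fibrewise entire bivector field on $NC$, defined on $V = \dom(\pi)$. Since $\psi$ is a diffeomorphism and $\psi|_C = \mathrm{id}$ (inherited from Gotay's theorem, Theorem \ref{thm:Gotay}), the zero section $C$ is coisotropic with respect to $\pi$: indeed $C\subset M$ is trivially coisotropic for $\omega^{-1}$, and coisotropy is preserved under the Poisson diffeomorphism $\psi$. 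Thus all the hypotheses of Theorem \ref{main} are met with $E = NC$ and this particular $\pi$.

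Next I would simply quote the conclusion of Theorem \ref{main}: for any $\alpha \in \Gamma(NC)$ with $\graph(-\alpha)\subset V$, the Maurer-Cartan series $MC(\alpha)$ — built from the $L_\infty[1]$-brackets $\lambda_k$ associated to $(NC,\pi)$ as in Remark \ref{remark:L_infty[1]} — converges, with limit $P((\phi^\alpha)_*\pi)$, and moreover $MC(\alpha)=0$ if and only if $\graph(-\alpha)$ is coisotropic in $(V,\pi)$. It then remains only to transport the statement back along $\psi$: since $\psi$ is a Poisson diffeomorphism from $(V,\psi^*\omega) = (V,\pi^{-1})$ to its image in $(M,\omega)$, a submanifold of $V$ is coisotropic for $\psi^*\omega$ exactly when its $\psi$-image is coisotropic in $(M,\omega)$; but coisotropy for a symplectic form and for its inverse Poisson bivector coincide, so $\graph(-\alpha)$ is coisotropic in $(V,\psi^*\omega)$ iff it is coisotropic in $(V,\pi)$. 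This yields the claimed equivalence verbatim, and the convergence statement is immediate.

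The only genuine subtlety — and the point I would be most careful about — is matching conventions between $\omega^{-1}$ (the Poisson bivector) and $\psi^*\omega$ (a symplectic form): one must check that $\psi^*(\omega^{-1}) = (\psi^*\omega)^{-1}$, i.e. that pullback of forms and of bivectors under a diffeomorphism are mutually inverse operations, so that the $L_\infty[1]$-algebra one feeds into Theorem \ref{main} is the one built from $\pi = (\psi^*\omega)^{-1}$; this is a formal but necessary bookkeeping step. A second small point is that Theorem \ref{main} as stated works with $\dom(\pi)$, whereas the statement of Theorem \ref{thm:symplectic_case} phrases things in terms of $(V,\psi^*\omega)$; shrinking $V$ if necessary so that $V = \dom(\pi)$ (equivalently, the image of $\psi$ lies inside the symplectic neighbourhood on which $\omega^{-1}$ is defined) resolves this. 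Neither of these is a real obstacle — the substance of the result is entirely carried by Corollary \ref{cor:sym} and Theorem \ref{main}, so the proof is essentially a two-line citation once the conventions are pinned down.
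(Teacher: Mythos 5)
Your proposal is correct and is exactly the paper's argument: the paper gives no separate proof of Theorem \ref{thm:symplectic_case}, simply noting that it follows by combining Corollary \ref{cor:sym} (which produces the tubular neighbourhood identification making $\psi^*(\omega^{-1})$ fibrewise entire) with Theorem \ref{main}. Your extra care about the identity $\psi^*(\omega^{-1})=(\psi^*\omega)^{-1}$ and about matching $V$ with $\dom(\pi)$ is sound bookkeeping that the paper leaves implicit.
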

Theorem \ref{thm:symplectic_case} asserts that in the symplectic world,
any coisotropic submanifold $C$ admits a tubular neighborhood such that the $L_{\infty}[1]$-algebra structure on $\Gamma(\wedge NC)[1]$ (see Remark \ref{remark:L_infty[1]})
controls the deformations of $C$ which are sufficiently close to $C$ with respect to the $\mathcal{C}^1$-topology.

\section{Simultaneous deformations and their obstructedness}\label{sec:sim}

Up to now we considered coisotropic submanifolds close to a given one $C$, inside a manifold with a fixed fibrewise entire Poisson structure $\pi$. Now we allow the   Poisson structure to vary inside the class of fibrewise entire Poisson structures. 
{We show that pairs $(\pi',C')$, consisting of a fibrewise entire Poisson structure $\pi'$ ``close'' to $\pi$  and a submanifold $C'$ close to $C$ and coisotropic with respect to $\pi'$, are also encoded by an $\li[1]$-algebra.}
 Then we show that the  problem of deforming simultaneously $C$ and $\pi$ is formally obstructed. In particular, we show that there is a first order deformation of $C$ and $\pi$ which can not be extended to a (smooth, or even formal) one-parameter family of deformations.

\subsection{Simultaneous deformations}
Thanks to Theorem \ref{main}, we can improve \cite[Corollary 5.3]{YaelZ}, extending it from polynomial to fibrewise entire Poisson structures.

\begin{cor}\label{cor:coiso}
Let $E\to C$ be a vector bundle and $U$ a fixed tubular neighborhood of the zero section.

There exists an $L_{\infty}[1]$-algebra structure on $\chi^{\bullet}_{\omega}(U)[2]\oplus \Gamma(\wedge E)[1]$ with the following property:
for all ${\pi}\in \chi_{\omega}^2(E)$
and $\alpha\in \Gamma(E)$ such that $\graph(-\alpha)\subset U$,
\begin{align*}
&\begin{cases}
 {\pi} \text{ is a Poisson structure   } \\
\graph({-\alpha}) \text{ is a coisotropic submanifold of  } (U, \pi)
\end{cases}\\
\Leftrightarrow \quad
&
( {\pi}[2],\alpha[1]) \text{ is a Maurer-Cartan element } \text{ of   } \chi^{\bullet}_{\omega}(U)[2]\oplus \Gamma(\wedge E)[1].
\end{align*}
Explicitly, the $L_{\infty}[1]$-algebra structure is given by the following multibrackets
 (all others vanish):
\begin{align*}
\lambda_1(X[1])&= P(X),\\
\lambda_2(X[1],Y[1])&=(-1)^{|X|}[X,Y][1],\\
\lambda_{n+1}(X[1],a_1,\dots,a_n)&=P([\dots[X,a_1],\dots,a_n])\;\;\;\;\;\; \text{for all }n\ge 1,
\end{align*}
where  $X,Y\in \chi^{\bullet}_{\omega}(U)[1]$, $a_1,\dots,a_n \in \Gamma(\wedge E)[1]$, and $[\cdot,\cdot]$ denotes the Schouten bracket on $\chi^{\bullet}_{\omega} (U)[1]$.
 \end{cor}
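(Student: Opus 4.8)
The plan is to build the claimed $L_\infty[1]$-algebra by transferring (or rather directly verifying) a known DGLA/$L_\infty$-structure, and then to match Maurer-Cartan elements on both sides. The natural starting point is the graded Lie algebra $\chi^\bullet_\omega(U)[1]$ with the Schouten bracket $[\cdot,\cdot]$: this is a genuine DGLA with zero differential, living in the ``curvature'' slot. To incorporate the coisotropic condition I would use the observation in Remark \ref{remark:L_infty[1]} that $P$ detects coisotropy, together with the fact — implicit in \cite{OP}, \cite{CaFeCo2}, and \cite{YaelZ} — that the sequence $\lambda_{n+1}(X[1],a_1,\dots,a_n)=P([\dots[X,a_1],\dots,a_n])$ is the $L_\infty$-analogue of ``twisting $P$ by the translation flow of $\alpha$''. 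So I would \emph{define} the multibrackets by the displayed formulas and check the higher Jacobi identities directly.

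First I would set up signs and conventions carefully (the shifts $[2]$ and $[1]$, graded symmetry of the $\lambda_k$), since the whole computation is bookkeeping-sensitive; here I can lean on \cite{YaelZ}, whose Corollary 5.3 is exactly this statement for \emph{polynomial} $\pi$, so the algebraic identities are already established there and only the analytic enlargement of the coefficient space is new. Concretely, I would observe that $\chi^\bullet_\omega(U)$ is closed under the Schouten bracket and under the operation $X\mapsto P([\dots[X,a_1],\dots,a_n])$ for $a_i\in\Gamma(\wedge E)$ viewed as fibrewise-constant vertical multivector fields — these preserve fibrewise entireness because bracketing with a fibrewise-polynomial (indeed fibrewise-constant) vector field and restricting to $C$ sends $\mathcal{C}^\omega(U)$ to itself. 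Hence all the structure maps land in the stated space, and the higher Jacobi relations hold because they hold formally (coefficientwise) and the relevant coefficient algebra $\mathcal{C}^\omega(U)$ is closed under the operations involved. This reduces the verification of the $L_\infty[1]$-identities to the identical computation already done in the polynomial setting.

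Then I would establish the Maurer-Cartan correspondence. Writing $\mathrm{MC}(\pi[2],\alpha[1])=\sum_k \frac{1}{k!}\lambda_k((\pi[2],\alpha[1])^{\otimes k})$ and collecting terms by how many $\pi$-slots and $\alpha$-slots appear: the term with two $\pi$'s and no $\alpha$ gives $\frac{1}{2}\lambda_2(\pi[2],\pi[2])=\pm[\pi,\pi][1]$, which vanishes iff $\pi$ is Poisson; the terms with one $\pi$ and $n\ge 0$ copies of $\alpha$ sum, via $\lambda_{n+1}(\pi[1],\alpha,\dots,\alpha)=P([\dots[\pi,\alpha],\dots,\alpha])$, to $P(e^{[\cdot,\alpha]}\pi)=\mathrm{MC}(\alpha)$ in the sense of Remark \ref{remark:L_infty[1]}; and there are no other terms (no bracket with zero or $\ge 3$ $\pi$-inputs, and $\lambda_1(\alpha)$ lands in a different slot / vanishes by the coisotropy of $C$ itself, which is where I use that $C$ is coisotropic for $\pi$ only implicitly — actually the hypothesis here is just $\graph(-\alpha)\subset U$). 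By Theorem \ref{main}, when $\pi$ is Poisson this second piece converges and equals $P((\phi^\alpha)_*\pi)$, which is zero iff $\graph(-\alpha)$ is coisotropic for $(U,\pi)$. Thus $(\pi[2],\alpha[1])$ is Maurer-Cartan iff $\pi$ is Poisson \emph{and} $\graph(-\alpha)$ is coisotropic, giving the equivalence.

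The main obstacle I anticipate is purely the convergence issue rather than the algebra: a priori the Maurer-Cartan \emph{series} of $(\pi[2],\alpha[1])$ need not converge term-by-term, so the statement ``$(\pi[2],\alpha[1])$ is a Maurer-Cartan element'' must be interpreted in the same pointwise-on-$C$ sense as in Theorem \ref{main}, and one must check that the only infinite subsum, namely $\sum_n \frac{1}{n!}P([\dots[\pi,\alpha],\dots,\alpha])$, is governed by Proposition \ref{lem:phipi} and hence converges precisely because $\pi$ is fibrewise entire and $\graph(-\alpha)\subset U$. The subtlety is that we need $\pi$ to be Poisson before we may invoke the $(\phi^\alpha)_*$-interpretation, but that is fine because the ``$\pi$ Poisson'' condition is the $[\pi,\pi]$-term which is a \emph{finite} expression and can be imposed first; once it holds, Theorem \ref{main} applies verbatim. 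I would state this convergence caveat explicitly in a remark, exactly paralleling the remark following Theorem \ref{main}.
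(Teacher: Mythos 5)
Your proposal follows essentially the same route as the paper: the paper's proof is a one-line application of Theorem \ref{main} combined with the abstract derived-bracket result \cite[Corollary 1.13]{YaelZ}, and your argument simply unpacks that citation --- the multibrackets are the higher derived brackets built from the Schouten algebra and the projection $P$, the higher Jacobi identities are the formal computation already done in \cite{YaelZ}, and the Maurer--Cartan correspondence together with the convergence of the only infinite subsum $\sum_n \frac{1}{n!}P([\dots[\pi,\alpha],\dots,\alpha])$ is exactly Theorem \ref{main} (via Proposition \ref{lem:phipi}), as you say. The one spot where your write-up is thinner than it appears is the closure claim: your justification (bracketing with fibrewise-constant vertical fields and restricting to $C$) covers the operations involving the $a_i$'s but not the full Schouten bracket $\lambda_2(X[1],Y[1])=(-1)^{|X|}[X,Y][1]$, whose coefficients involve base-direction derivatives of fibrewise entire functions --- precisely the operation the paper's own remark on $\partial f/\partial x$ flags as delicate --- though the paper's statement and one-line proof gloss over this point as well.
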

\begin{proof}
Use Theorem \ref{main} in order to apply \cite[Corollary 1.13]{YaelZ}.
\end{proof}

\subsection{Obstructedness}
We review what it means for a deformation problem to be formally obstructed in an abstract setting:
\begin{itemize}
\item Let $W_0$ be a topological vector space and $G$ a subset (so the elements of $G$ are distinguished among elements of $W_0$). Let $x\in G$. A \emph{deformation} of $x$  is just an element of $G$ (usually though of being ``nearby'' $x$ in some sense).
\item We say that  an $\li[1]$-algebra $(W,\{\lambda_k\}_{k\ge 1})$, whose degree zero component is exactly the above vector space, \emph{governs}
 the  deformation problem of $x$ if the following is satisfied:  $y\in W_0$ satisfies\footnote{By definition, this means that the Maurer-Cartan series $MC(y)$ converges to zero.}  the Maurer-Cartan equation if{f} $x+y$ is a deformation of $x$.
\end{itemize}

\begin{defi} Suppose  a certain  deformation problem is governed by the $\li[1]$-algebra $(W,\{\lambda_k\}_{k\ge 1})$.
The deformation problem is said to be \emph{formally unobstructed} if for any class $A \in H^1(W)$ there is a sequence $\{z_k\}_{k\ge 1}\subset W_0$, with $z_1$ being $\lambda_1$-closed and representing the class $A$, so that $\sum_{k\ge 1}z_kt^k$
is a (formal) solution of the Maurer-Cartan equation for $(W,\{\lambda_k\}_{k\ge 1})$. Here  $H(W)$ is the cohomology of the complex  $(W,\lambda_1)$, and $t$ is a formal variable.
\end{defi}

Later on, to show the formal {obstructedness},
we will use the criteria   \cite[Theorem 11.4]{OP},  which reads as follows:
\begin{thm}\label{thm:opobstr}
Suppose  a certain  deformation problem   is governed by  the $\li[1]$-algebra $(W,\{\lambda_k\}_{k\ge 1})$.
Define the Kuranishi map
\begin{equation}
\Kr \colon H^1(W)  \to H^2(W), \;\;\ [z]\mapsto [\lambda_2(z,z)].
\end{equation}
If $A\in H^1(W)$ satisfies $\Kr(A)\neq 0$, then there is no formal solution  $\sum_{k\ge 1}z_kt^k$
of the Maurer-Cartan equation with $[z_1]=A$.
In particular, if the Kuranish map is not identically zero, the deformation problem is formally obstructed.
\end{thm}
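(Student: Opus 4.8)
The statement to prove is Theorem~\ref{thm:opobstr}, the Oh--Park obstruction criterion via the Kuranishi map. Let me think about how this goes.

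\textbf{Setup and strategy.} The plan is to argue by contradiction: assume $A\in H^1(W)$ satisfies $\Kr(A)\ne 0$, yet there exists a formal Maurer--Cartan element $z(t)=\sum_{k\ge 1}z_k t^k$ with $[z_1]=A$. I will expand the Maurer--Cartan equation $\sum_{n\ge 1}\frac{1}{n!}\lambda_n(z(t)^{\otimes n})=0$ in powers of $t$ and extract the low-order coefficients. The key observation is that the coefficient of $t^N$ in this series, for each $N$, is $\lambda_1(z_N)$ plus a term that depends only on $z_1,\dots,z_{N-1}$ and the higher brackets; in particular $\lambda_1(z_N)$ is exact-by-construction balanced against an explicit obstruction cocycle built from lower data.

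\textbf{Key steps.} First I would record the order-$t^1$ equation, which reads $\lambda_1(z_1)=0$, so $z_1$ is indeed a cocycle and $A=[z_1]$ makes sense. Next, the order-$t^2$ equation: collecting the $t^2$-coefficient of $\sum_n\frac1{n!}\lambda_n(z(t)^{\otimes n})$, only $n=1$ and $n=2$ contribute, giving
\[
\lambda_1(z_2)+\tfrac12\lambda_2(z_1,z_1)=0 ,
\]
so $\lambda_2(z_1,z_1)=-2\,\lambda_1(z_2)$ is $\lambda_1$-exact. Since $z_1$ is a $\lambda_1$-cocycle, $\lambda_2(z_1,z_1)$ is a $\lambda_1$-cocycle as well (this follows from the quadratic $L_\infty[1]$-identity relating $\lambda_1$ and $\lambda_2$, i.e.\ $\lambda_1$ is a derivation of $\lambda_2$ up to the relevant signs), so it defines a class in $H^2(W)$, and that class is precisely $\Kr(A)=[\lambda_2(z_1,z_1)]$. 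But we have just shown this class is zero in $H^2(W)$, contradicting $\Kr(A)\ne 0$. Hence no such formal solution exists. The final sentence of the theorem is then immediate: if $\Kr$ is not identically zero, pick $A$ with $\Kr(A)\ne0$; by definition of "formally unobstructed" one would need a formal MC solution with $[z_1]=A$, which we have just ruled out, so the deformation problem is formally obstructed.

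\textbf{Main obstacle.} The only non-routine point is the well-definedness of the Kuranishi map and the claim that $\lambda_2(z_1,z_1)$ is $\lambda_1$-closed whenever $z_1$ is $\lambda_1$-closed --- i.e.\ that $\Kr$ descends to cohomology. This is exactly the content of the first higher Jacobi identity involving only $\lambda_1$ and $\lambda_2$ (graded Leibniz rule of $\lambda_1$ over $\lambda_2$), combined with graded symmetry of $\lambda_2$; one should also check that replacing $z_1$ by $z_1+\lambda_1(w)$ changes $\lambda_2(z_1,z_1)$ by a $\lambda_1$-exact term, again using the same identity. Since $W$ is only a topological vector space and the sum defining $MC$ is infinite, one should note that at each fixed order $t^N$ only finitely many brackets $\lambda_n$ (those with $n\le N$) contribute, so every coefficient equation is a finite, well-defined identity in $W$ and no convergence issue arises at the formal level. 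Beyond that the argument is purely formal bookkeeping in the $t$-adic expansion.
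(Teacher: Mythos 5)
Your argument is correct: extracting the $t^2$-coefficient of the formal Maurer--Cartan equation gives $\lambda_1(z_2)+\tfrac12\lambda_2(z_1,z_1)=0$, so $\lambda_2(z_1,z_1)$ is $\lambda_1$-exact and $\Kr(A)=0$, contradicting the hypothesis; the well-definedness of $\Kr$ follows, as you say, from the arity-two higher Jacobi identity. Note that the paper itself offers no proof of this statement --- it is quoted verbatim from \cite[Theorem 11.4]{OP} --- so there is nothing to compare against, but your proof is the standard one and is complete.
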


\subsection{Obstructedness of the extended deformation problem}

 Given a symplectic manifold $(M,\omega)$ and a coisotropic submanifold $C$ of $(M,\omega)$, Oh-Park \cite[\S 11]{OP} showed that the problem of deforming $C$ to nearby coisotropic submanifolds of $(M,\omega)$ is formally obstructed. Oh-Park ask whether the deformation problem remains formally obstructed if one  allows \emph{both} the symplectic form $\omega$ \emph{and} the submanifold $C$ to
 vary (they refer to this as ``extended deformation problem'' \cite[\S 13, p. 355]{OP}). In this subsection we answer this question in the setting of fibrewise entire Poisson structures, showing that the extended deformation problem is formally obstructed too, see Corollary \ref{cor:ob}.

Let $E\to C$ be a vector bundle and $\pi$ a fibrewise entire Poisson structure which is defined on a tubular neighborhood $U=\dom(\pi)$ of $C$ in $E$. Suppose that $C$ is coisotropic with respect to $\pi$. These conditions are equivalent to\footnote{Here we view $\pi$ as an element of
$\chi^{\bullet}_{\omega}(U)[1]$ (notice the degree shift).}
 $(\pi[1],0)$ satisfying the Maurer-Cartan equation in the $\li[1]$-algebra of Corollary \ref{cor:coiso}.

The $\li[1]$-algebra governing the deformations of  the coisotropic submanifold $C$ and the Poisson structure $\pi$ is obtained from the $\li[1]$-algebra of Corollary \ref{cor:coiso}, by twisting it by the Maurer-Cartan element $(\pi[1],0)$ (\cite[Proposition 4.4]{GetzlerAnnals}, see also \cite[\S 1.3]{YaelZ}). Hence it is
$\chi^{\bullet}_{\omega}(U)[2]\oplus \Gamma(\wedge E)[1]$, with multi-brackets:
\begin{align*}
\lambda_1(X[1])&= (-[\pi,X][1],P(X)),\\
\lambda_1(a)&= (0,P([\pi,a])),\\
\lambda_2(X[1],Y[1])&=(-1)^{|X|}[X,Y][1],\\
\lambda_{n}(a_1,\dots,a_n)&=P([\dots[\pi,a_1],\dots,a_n])\;\;\;\;\;\; \text{for all }n\ge 1,\\
\lambda_{n+1}(X[1],a_1,\dots,a_n)&=P([\dots[X,a_1],\dots,a_n])\;\;\;\;\;\; \text{for all }n\ge 1,
\end{align*}
where  $X,Y\in \chi^{\bullet}_{\omega}(U)[1]$, $a_1,\dots,a_n \in \Gamma(\wedge E)[1]$.
We denote the above $\li[1]$-algebra by $W(C,\pi)$.\\

\begin{defi}
Let $\pi$ be a fibrewise entire Poisson structure on $E\to C$. Suppose that $C$ is coisotropic with
respect to $\pi$.
The {\em extended deformation problem} of $C$ is the deformation problem governed by
the $L_{\infty}[1]$-algebra structure $W(C,\pi)$.
\end{defi}

We are now ready to display an example showing that, in general, the extended deformation problem (with the requirement that the Poisson structures involved be fibrewise entire)
is formally obstructed. The example is exactly the one previously used by the second author \cite{coisoemb} and by Oh-Park \cite[Ex. 11.4]{OP}.

\begin{prop}\label{prop:T4}
Let $C=\RR^4/\ZZ^4$ be  the 4-dimensional torus with coordinates $(y_1,y_2,q_1,q_2)$. Set $E=\RR^2\times C$, and denote by $p_1,p_2$ the coordinates on $\RR^2$. Consider the Poisson structure $\pi$ on $E$ obtained by inverting the symplectic form
$$\Omega:=dy_1dy_2+(dq_1dp_1+dq_2dp_2).$$

There exists $a\in \Gamma(E)[1]$ such that the
class
 $A\in H^1(W(C,\pi))$ represented by
$(0[1],a)$ satisfies $\Kr(A)\neq 0$.
\end{prop}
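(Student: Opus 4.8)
The strategy is to reduce the computation to the one already carried out by Oh--Park \cite{OP} and the second author \cite{coisoemb} for the purely coisotropic deformation problem, and then to check that passing to the extended complex $W(C,\pi)$ does not kill the obstruction class. The first step is to identify the kernel bundle $E$ of the pre-symplectic form $\omega_C=\Omega|_C$: here $\omega_C=dy_1dy_2$, so $E=\mathrm{span}(\partial_{p_1},\partial_{p_2})$ is spanned by the two ``position-dual'' directions, and the normal bundle $NC$ is trivial with fibre coordinates $p_1,p_2$; the bivector $\pi=\Omega^{-1}$ has the block form $\partial_{y_1}\wedge\partial_{y_2}+\partial_{q_1}\wedge\partial_{p_1}+\partial_{q_2}\wedge\partial_{p_2}$, which is manifestly fibrewise polynomial (hence fibrewise entire), so that $(\pi[1],0)$ is indeed a Maurer--Cartan element of the $\li[1]$-algebra of Corollary \ref{cor:coiso} and $W(C,\pi)$ is the twist described above.

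Next I would recall the specific first-order deformation used by Oh--Park: take $a\in\Gamma(E)$ of the form $a = f(q_1,q_2)\,\partial_{p_1}$ for a suitable $\ZZ^2$-periodic function $f$ (in Oh--Park's example $f=\cos(2\pi q_1)$ or a similar harmonic), so that $\lambda_1(a) = P([\pi,a])$ vanishes --- i.e. $a$ defines a genuine infinitesimal coisotropic deformation --- while the quadratic term $\lambda_2(a,a)=\tfrac12 P([[\pi,a],a])$ is a nonzero element of $\Gamma(\wedge^2 E)$ whose cohomology class in $H^2$ of the Oh--Park complex is nonzero. The computation of $[\pi,a]$ and $[[\pi,a],a]$ is a short bracket calculation in these coordinates; the point is that $[\pi,a]$ produces a term along $\partial_{q_1}\wedge(\text{something involving }\partial f)$ whose further bracket with $a$ yields an honest $\wedge^2 E$-valued $2$-cochain $P([[\pi,a],a])$ that is $\lambda_1$-closed but not $\lambda_1$-exact. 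That the class is nonzero is precisely the content of \cite[Ex. 11.4]{OP} (and \cite[Ex. in \S ...]{coisoemb}), so I would quote it rather than re-derive it.

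The genuinely new point is the last step: one must check that the obstruction survives in the \emph{extended} cohomology $H^2(W(C,\pi))$, not merely in the cohomology of the sub-$\li[1]$-algebra $\Gamma(\wedge E)[1]$. Concretely, $W(C,\pi)=\chi^{\bullet}_{\omega}(U)[2]\oplus\Gamma(\wedge E)[1]$, and the differential $\lambda_1$ mixes the two summands via $\lambda_1(X[1])=(-[\pi,X][1],P(X))$ and $\lambda_1(a)=(0,P([\pi,a]))$. Since the element we plug in is $(0[1],a)$ with $a\in\Gamma(E)[1]$, one has $\lambda_2\big((0[1],a),(0[1],a)\big)=(0[1],\lambda_2(a,a))$ by the explicit formula (the bracket $\lambda_2$ on two pure $\Gamma(\wedge E)$-elements lands in $\chi^{\bullet}_{\omega}(U)[2]$ only through the Schouten-bracket term $\lambda_2(X,Y)$, which is absent when both arguments are of the form $(0,a)$ --- so in fact $\lambda_2((0,a),(0,a))$ reduces to $\lambda_3(\pi,a,a)=P([[\pi,a],a])$ after untwisting, wait: after twisting this is the $\lambda_2$ of $W(C,\pi)$, which on $(0,a),(0,a)$ gives $(0, P([[\pi,a],a]))$). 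Thus the Kuranishi image $\Kr(A)$ is represented by $(0[1], P([[\pi,a],a]))$. The obstacle I expect to be the main one is showing this is nonzero in $H^2(W(C,\pi))$: I must rule out that $(0[1],P([[\pi,a],a]))=\lambda_1(Z[1],b)$ for some $(Z[1],b)\in W(C,\pi)$ of degree $1$. The second component equation forces $P([[\pi,a],a]) - P(Z) - P([\pi,b]) \in \mathrm{im}$, i.e. $P([[\pi,a],a]) = P(Z) + P([\pi,b])$, while the first component forces $[\pi,Z]=0$. The key is that $P(Z)$ ranges only over $\Gamma(\wedge E)$ in the image of $P$ restricted to $\pi$-closed multivector fields, and one shows --- using that on the torus example the relevant piece of $\chi^{\bullet}_{\omega}(U)$ contributing to $P$ is controlled, and that $[\pi,Z]=0$ is restrictive --- that this does not enlarge the space of coboundaries enough to hit the Oh--Park obstruction class. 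Concretely I would argue that the projection $W(C,\pi)\to \Gamma(\wedge E)[1]$ (killing the $\chi^{\bullet}_{\omega}(U)$-summand) is \emph{not} a chain map in general, so one cannot directly project; instead I would use the explicit smallness/periodicity of the torus example to compute the relevant low-degree cohomology of $W(C,\pi)$ by hand, exhibiting a linear functional on $H^2(W(C,\pi))$ (e.g. integration against a suitable closed form on $C$, or extraction of a specific Fourier coefficient) that is nonzero on $[(0[1],P([[\pi,a],a]))]$ and vanishes on all coboundaries. This computation, being on a flat torus with an explicit polynomial $\pi$, reduces to Fourier analysis and is where the real work lies; everything else is bookkeeping with the multibrackets of $W(C,\pi)$.
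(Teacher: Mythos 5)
Your identification of the relevant bundles is off, and this propagates into a fatal choice of the first-order deformation $a$. The kernel of $\omega_C=dy_1dy_2$ inside $TC$ is $\mathrm{span}(\partial_{q_1},\partial_{q_2})$ (the paper's $F$), not $\mathrm{span}(\partial_{p_1},\partial_{p_2})$; the latter spans the normal directions. More importantly, for $a=a_1\partial_{p_1}+a_2\partial_{p_2}$ a direct Schouten-bracket computation gives $P([\pi,a])=\pm(\partial_{q_1}a_2-\partial_{q_2}a_1)\,\partial_{p_1}\wedge\partial_{p_2}$ and $P([[\pi,a],a])=2\bigl(\tfrac{\partial a_1}{\partial y_1}\tfrac{\partial a_2}{\partial y_2}-\tfrac{\partial a_1}{\partial y_2}\tfrac{\partial a_2}{\partial y_1}\bigr)\partial_{p_1}\wedge\partial_{p_2}$: the quadratic obstruction is the Jacobian of $(a_1,a_2)$ with respect to the \emph{symplectic} coordinates $(y_1,y_2)$, not the leaf coordinates $(q_1,q_2)$. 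Your proposed $a=f(q_1,q_2)\,\partial_{p_1}$ is independent of $y$ and has $a_2=0$, so $P([[\pi,a],a])=0$ identically (in fact $\lambda_1(a)=0$ forces $f=f(q_1)$, and then even $[\pi,a]=0$); the Kuranishi class of such an $a$ vanishes and no obstruction can be extracted from it. The paper (following \cite{coisoemb} and \cite[Ex.~11.4]{OP}) takes $a=(\sin(2\pi y_1),\sin(2\pi y_2))$, for which the obstruction is $8\pi^2\cos(2\pi y_1)\cos(2\pi y_2)\,\partial_{p_1}\wedge\partial_{p_2}$.

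Second, you correctly isolate the genuinely new point --- ruling out that $(0[1],P([[\pi,a],a]))$ is exact in $W(C,\pi)$, where the space of coboundaries now contains $P(\tau)$ for all $\tau$ with $[\pi,\tau]=0$ in addition to $P([\pi,b])$ --- but you do not actually give the argument, deferring to an unspecified Fourier-analytic computation of $H^2(W(C,\pi))$. The paper's argument is short and geometric: transporting everything through $\tilde{\sharp}^*$ to foliated forms on $F$, the integral over the closed leaf $\Sigma_y=\{y\}\times(\RR^2/\ZZ^2)$ kills $P([\pi,b])$ by Stokes' theorem, while $[\pi,\tau]=0$ means $(\sharp^*)^{-1}\tau$ is $d_{dR}$-closed, so $\int_{\Sigma_y}\iota^*\bigl((\sharp^*)^{-1}\tau\bigr)$ is independent of $y$ because the leaves $\Sigma_y$ are homologous via the cylinders $\Delta_{y,y'}$. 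Hence exactness of the Kuranishi class would force $y\mapsto\int_{\Sigma_y}(\tilde{\sharp}^*)^{-1}P([[\pi,a],a])$ to be constant, whereas the chosen $a$ yields the non-constant function $8\pi^2\cos(2\pi y_1)\cos(2\pi y_2)$. You would need to supply an argument of this kind (or carry out your proposed cohomology computation in full) for the proof to close; as written, the proposal both selects an $a$ that cannot work and leaves the decisive non-exactness step unproved.
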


\begin{cor}\label{cor:ob}
The extended deformation problem for the coisotropic submanifold $C$ in the symplectic manifold $(E,\Omega)$ as in Proposition \ref{prop:T4}  is formally obstructed.
\end{cor}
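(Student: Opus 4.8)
The plan is to obtain Corollary \ref{cor:ob} as an immediate consequence of Proposition \ref{prop:T4} and the Oh--Park criterion, Theorem \ref{thm:opobstr}. The first thing to settle is that the extended deformation problem of $C$ really is defined for this example. Since $\Omega = dy_1dy_2 + dq_1dp_1 + dq_2dp_2$ has constant coefficients, its inverse $\pi = \Omega^{-1}$ is a constant-coefficient bivector field on $E = \RR^2\times C$; being fibrewise polynomial it lies in $\chi^2_\omega(E)$, and it is of course Poisson. Moreover the zero section $C$ is coisotropic: its conormal bundle is spanned by $dp_1, dp_2$, and $\pi^{\sharp}(dp_k)$ is a nonzero multiple of $\partial_{q_k}\in TC$. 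Hence $(\pi[1],0)$ is a Maurer--Cartan element of the $\li[1]$-algebra of Corollary \ref{cor:coiso}, the twisted algebra $W(C,\pi)$ is defined, and by the Definition preceding Proposition \ref{prop:T4} the extended deformation problem of $C$ is, by construction, governed by $W(C,\pi)$.

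With this in place, I would simply feed Proposition \ref{prop:T4} into Theorem \ref{thm:opobstr}. Proposition \ref{prop:T4} provides a class $A\in H^1(W(C,\pi))$ --- represented by $(0[1],a)$ for a suitable section $a$ --- with $\Kr(A)\neq 0$; in particular the Kuranishi map $\Kr\colon H^1(W(C,\pi))\to H^2(W(C,\pi))$ is not identically zero. Theorem \ref{thm:opobstr} then applies verbatim with $(W,\{\lambda_k\}) = W(C,\pi)$ and yields that no formal curve $\sum_{k\ge1}z_k t^k$ solving the Maurer--Cartan equation of $W(C,\pi)$ can have $[z_1] = A$; equivalently, $(0[1],a)$ is a first-order deformation of the pair $(C,\pi)$ that does not extend even to a formal one-parameter family, so the extended deformation problem is formally obstructed.

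Given that Proposition \ref{prop:T4} is available, there is essentially no obstacle left in the corollary itself; the only thing to be careful about is matching conventions --- the degree shifts ``$[1]$'', ``$[2]$'' and the precise form of $\lambda_1, \lambda_2$ in the twisted algebra $W(C,\pi)$ --- so that the hypothesis ``$\Kr$ not identically zero'' of Theorem \ref{thm:opobstr} is applied to the right $\li[1]$-algebra. The genuine work sits entirely in Proposition \ref{prop:T4}, where one must exhibit explicit cocycle representatives for $H^1(W(C,\pi))$ on the torus example and check that $\lambda_2(z,z)$ of a suitable representative fails to be a $\lambda_1$-coboundary --- a computation modelled on Oh--Park's \cite[Ex. 11.4]{OP} and on \cite{coisoemb}, now transplanted into the fibrewise entire setting.
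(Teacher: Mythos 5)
Your proposal is correct and follows exactly the route the paper intends: Corollary \ref{cor:ob} is an immediate consequence of Proposition \ref{prop:T4} combined with the Kuranishi-map criterion of Theorem \ref{thm:opobstr}, and your preliminary check that $\pi=\Omega^{-1}$ is fibrewise entire (indeed constant-coefficient) and that $C$ is coisotropic, so that $W(C,\pi)$ is defined, is a sensible addition. No gaps.
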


\begin{proof}[Proof of Proposition \ref{prop:T4}]

Recall that the Poisson structure $\pi$ on $E$ makes $T^*E$ into a Lie algebroid over $E$ with anchor map
$\sharp$ given by $\sharp(\gamma):=\pi(\gamma,\cdot)$. Because $\pi$ is symplectic, the anchor $\sharp$
is an isomorphism of Lie algebroids from $T^*E$ to $TE$.
Dually, we obtain a vector bundle isomorphism
$$\sharp^* \colon \wedge T^*E \to \wedge TE$$
 which intertwines the Lie algebroid differentials
$d_{dR}$ and $[\pi,\cdot]$.

Since $C$ is coisotropic with respect to $\pi$, its conormal bundle $(TC)^{\circ} \cong E^*$ is a Lie subalgebroid of $T^*E$.
The Lie algebroid isomorphism $\sharp$ restricts to an isomorphism of Lie subalgebroids $\tilde{\sharp} \colon E^* \to F$, where $F=\textrm{span}\{\pd{q_1},\pd{q_2}\}$ is the kernel of the pullback of $\Omega$ to $C$.
By dualizing we obtain
$$\tilde{\sharp}^* \colon \Gamma(\wedge F^*)\to  \Gamma(\wedge E)$$ which intertwines the Lie algebroid differentials
$d_{F}=d_{dR}|_{\Gamma(\wedge F^*)}$ and $P([\pi,\cdot])$.
Notice that 
\begin{equation}\label{eq:sh}
P\circ{\sharp}^*=\tilde{\sharp}^*\circ \iota^*\colon \Gamma(\wedge T^*E)\to \Gamma(E),
\end{equation}
 where $\iota\colon F\to TE$ is the inclusion.

Now let $a\in \Gamma(E)[1]$ such that $(0[1],a)$ is a $\lambda_1$-closed element of $W(C,\pi)$, i.e. such that $P([\pi,a])=0$. Suppose that $A=[(0[1],a)]$ satisfies $\Kr(A)=0$. In other words, assume that
there are $\tau\in \chi^{2}_{\omega}(U)[1]$ and $b\in  \Gamma(E)[1]$
such that $$\lambda_1(\tau[1],b)=(-[\pi,\tau][1],P(\tau+[\pi,b])) \;\;\;
\overset{!}{=}\;\;\;
\lambda_2((0[1],a),(0[1],a))=(0[1],P([[\pi,a],a])),$$ holds, i.e. so that
\begin{eqnarray}
\label{eq1}[\pi,\tau]&=&0, \qquad \qquad \textrm{and}\\
\label{eq2}P(\tau+[\pi,b])&=&P([[\pi,a],a])
\end{eqnarray}
are satisfied.

Let $$\beta:=(\tilde{\sharp}^*)^{-1}(P[[\pi,a],a])\in \Gamma(\wedge^2F^*)$$
and consider the submanifolds
$\Sigma_y=\{(y,q): q\in \RR^2/\ZZ^2\}$ and $\Sigma_{y'}$ of $C$, corresponding to two
fixed points $y$ and $y'$ of $\RR^2/\ZZ^2$. Let
$\Delta_{y,y'}:=\{(y+t(y'-y),q): t\in [0,1], q \in \RR^2/\ZZ^2\}$, a 3-dimensional submanifold of $C$ with boundary $
\Sigma_y\cup\overline{\Sigma_{y'}}$, where the bar indicates orientation reversal.
Using equation \eqref{eq2} to rewrite $\beta$, we see that $\beta$
 satisfies
\begin{equation*}
\int_{\Sigma_y\cup \overline{\Sigma_{y'}}}\beta=
\int_{\Sigma_y\cup \overline{\Sigma_{y'}}}(\tilde{\sharp}^*)^{-1}P\tau
=\int_{\Sigma_y\cup \overline{\Sigma_{y'}}}\iota^*(({\sharp}^*)^{-1}\tau)
=\int_{\Delta_{y,y'}}d_{dR}(({\sharp}^*)^{-1}\tau)=0,
\end{equation*}
where
 \begin{itemize}
\item in the first equation we used that
$(\tilde{\sharp}^*)^{-1}$ maps $P([\pi,b])$  to $d_F((\tilde{\sharp}^*)^{-1} b)$,
 and
Stokes' theorem on $\Sigma_y\cup \overline{\Sigma_{y'}}$,
\item in the  second   we used eq. \eqref{eq:sh},
\item in the third   we used Stokes' theorem on $\Delta_{y,y'}$,
\item in the  fourth  we used that
$(\sharp^*)^{-1}$ maps $\tau$ to a  $d_{dR}$-closed form (a consequence of equation \eqref{eq1}).
\end{itemize}
We conclude that the function $F\colon \RR^2/\ZZ^2\to \RR, y \mapsto \int_{\Sigma_y}\beta$ induced by the section $a$ is a constant function.

Now we make a specific choice for $a\in \Gamma(E)[1]$, namely we choose it to be given by
\begin{equation}\label{eq:sin}
(a_1,a_2) \colon C \to \RR^2, \quad
(y_1,y_2,q_1,q_2)\mapsto (\sin(2\pi y_1),\sin(2\pi y_2)).
\end{equation}
The condition $P([\pi,a])=0$ is equivalent to $(\tilde{\sharp}^*)^{-1} a$ being $d_F$-closed, which
is satisfied, as
 $(\tilde{\sharp}^*)^{-1} a=-\sin(2\pi y_1)dq_1-\sin(2\pi y_2)dq_2$.

Now, a direct computation
shows $$P([[\pi,a],a])=2\left(\frac{\partial a_1}{\partial y_1}\frac{\partial a_2}{\partial y_2}-\frac{\partial a_1}{\partial y_2}\frac{\partial a_2}{\partial y_1}\right)\partial_{p_1}\wedge\partial_{p_2}=
8\pi^2 \cos(2\pi y_1)\cos(2\pi y_2)\partial_{p_1}\wedge\partial_{p_2}.$$
The function $F$ is therefore given by  $$F(y)= \int_{\Sigma_y}\beta =8\pi^2\cos(2\pi y_1)\cos(2\pi y_2),$$
so in particular it is not a constant function
and
we deduce that the specific choice of $a$ as in equation \eqref{eq:sin} satisfies $\Kr(A)\neq 0$.
\end{proof}

\bibliographystyle{habbrv}
\bibliography{Actbib}

\begin{thebibliography}{1}

\bibitem{CaFeCo2}
A.~S. Cattaneo and G.~Felder.
\newblock {Relative formality theorem and quantisation of coisotropic
  submanifolds}.
\newblock {\em Adv. in Math.}, 208:521--548, 2007.

\bibitem{YaelZ}
Y.~Fregier and M.~Zambon.
\newblock {$L_{\infty}$-algebras governing simultaneous deformations via
  derived brackets}.
\newblock 02 2012, ArXiv \texttt{1202.2896}.

\bibitem{GetzlerAnnals}
E.~Getzler.
\newblock Lie theory for nilpotent {$L_\infty$}-algebras.
\newblock {\em Ann. of Math. (2)}, 170(1):271--301, 2009.

\bibitem{Gotay}
M.~J. Gotay.
\newblock On coisotropic imbeddings of presymplectic manifolds.
\newblock {\em Proc. Amer. Math. Soc.}, 84(1):111--114, 1982.

\bibitem{LadaStasheff}
T.~Lada and J.~Stasheff.
\newblock Introduction to {SH} {L}ie algebras for physicists.
\newblock {\em Internat. J. Theoret. Phys.}, 32(7):1087--1103, 1993.

\bibitem{OP}
Y.-G. Oh and J.-S. Park.
\newblock Deformations of coisotropic submanifolds and strong homotopy {L}ie
  algebroids.
\newblock {\em Invent. Math.}, 161(2):287--360, 2005.

\bibitem{FloDiss}
F.~Sch\"atz.
\newblock {Coisotropic Submanifolds and the BFV-Complex, PhD Thesis (University
  of Z\"urich), 2009}, \texttt{http://www.math.ist.utl.pt/$\sim$fschaetz/}.

\bibitem{alancoiso}
A.~Weinstein.
\newblock Coisotropic calculus and {P}oisson groupoids.
\newblock {\em J. Math. Soc. Japan}, 40(4):705--727, 1988.

\bibitem{coisoemb}
M.~Zambon.
\newblock An example of coisotropic submanifolds {$C^1$}-close to a given
  coisotropic submanifold.
\newblock {\em Differential Geom. Appl.}, 26(6):635--637, 2008.

\end{thebibliography}
\end{document}